\newenvironment{enumeratei}{\begin{enumerate}[\upshape (i)]}{\end{enumerate}}
\numberwithin{equation}{section}
\theoremstyle{plain}
 \newtheorem{theorem}{Theorem}[section]
 \newtheorem{lemma}[theorem]{Lemma}
 \newtheorem{proposition}[theorem]{Proposition}
 \newtheorem{corollary}[theorem]{Corollary}
\theoremstyle{definition}
 \newtheorem{definition}[theorem]{Definition}
\theoremstyle{remark}
 \newtheorem*{caseone}{Case $1$}
 \newtheorem*{casetwo}{Case $2$}
 \newtheorem*{casethree}{Case $3$}
\newcommand \url [1] {\tt{#1}}
\newcommand \upcover [1] {#1^\ast}
\newcommand \slft {\lambda} 
\newcommand \eslft {\lambda^{\kern-1pt \mathord=}}
\newcommand \srght {\varrho}
\newcommand \esrght {\varrho^{\kern 0pt \mathord=}}
\newcommand \leqslft {\lambda^{\kern-1pt \mathord\leq}}
\newcommand \leqsrght {\varrho^{\kern-0pt \mathord\leq}}
\newcommand \geqslft {\lambda^{\kern-1pt \mathord\geq}}
\newcommand \geqsrght {\varrho^{\kern-0pt \mathord\geq}}
\newcommand \biggsrght {\varrho^{\kern-0pt \mathord>}}
\newcommand \lessslft {\lambda^{\kern-1pt \mathord<}}
\newcommand \lesssrght {\varrho^{\kern-0pt \mathord<}}
\newcommand \biggslft {\lambda^{\kern-1pt \mathord>}}
\newcommand \lft {\mathrel{\slft}} %relation
\newcommand \rght {\mathrel{\srght}}
\newcommand \elft {\mathrel{\eslft}}   %equal-or-left relation
\newcommand \leqlft {\mathrel{\leqslft}} %leq or left
\newcommand \leqrght {\mathrel{\leqsrght}} %leq or right
\newcommand \geqlft {\mathrel{\geqslft}} %geq or left
\newcommand \leslft {\mathrel{\lessslft}}% < or left
\newcommand \lesrght {\mathrel{\lesssrght}}% < or right
\newcommand \bigglft {\mathrel{\biggslft}} % > or left
\newcommand \biggrght {\mathrel{\biggsrght}} % > or right
\newcommand \geqrght {\mathrel{\geqsrght}} 
\newcommand \nnul {\tilde 0}
\newcommand \plu [1] {#1^{\kern 0pt{\mathord {\pmb+}}}}
\newcommand \Elig {E} %Set of eligible pairs
\newcommand\filter[1]{\mathord\uparrow #1}
\newcommand\hcofilter[1]{{\mathord\uparrow}^{\textup{hco}}\kern 0.5pt #1}
\newcommand\phcofilter[2]{{\mathord\uparrow}^{\textup{hco}}_{#1}\kern 0.5pt #2}
\newcommand \Hco [1] {F_{\textup{hco}}(#1)}
\newcommand \dleq {\mathrel{\leq^d}}
\newcommand \dsleq {\mathrel{<^d}}
\newcommand \dsgeq {\mathrel{>^d}}
\newcommand \Max {\textup{Max}}
\newcommand \Min {\textup{Min}}
\newcommand \Nar [1] {\textup{Nar}(#1)}
\newcommand \refl [1] {\textup{VFlip}(#1)}
\newcommand \alg [1] {\mathfrak #1}
\newcommand \lmost [1] {\textup{lbe}(#1)}
\newcommand \rmost [1] {\textup{rbe}(#1)}
\newcommand \eptofilt {\phi}
\newcommand \filttoep {\pi}
\newcommand \Betw [2] {\textup{Betw}(#1,#2)}
\newcommand \MinBetw [2] {\textup{Min}\,\textup{Betw}(#1,#2)}
\newcommand \nonparallel {\mathrel{
\not\mathord{\kern -1.5 pt\parallel}}}
\newcommand \lbound [1] {\textup C_{\textup l}(#1)}
\newcommand \rbound [1] {\textup C_{\textup r}(#1)}
\newcommand \lsp [1] {\textup{lsp}(#1)}
\newcommand \rsp [1] {\textup{rsp}(#1)}
\newcommand \lds [1] {\textup{lds}(#1)}
\newcommand \rds [1] {\textup{rds}(#1)}
\newcommand\ideal[1]{\mathord\downarrow #1}
\newcommand \powset [1] {\textup{PowSet}(#1)}
\newcommand \tuple [1] {\langle #1\rangle}
\newcommand \pair [2] {\tuple{#1,#2}}
\renewcommand\phi{\varphi}
\renewcommand\emptyset{\varnothing}
\newcommand \tbf [1] {\textbf{#1}} 
\newcommand \set[1] {\{#1\}}
\newcommand \bigset[1] {\bigl\{#1\bigr\}}
\newcommand \Jir [1] {\textup{Ji}\,#1} 
\newcommand \Mir [1] {\textup{Mi}\,#1}
\newcommand \then {\mathrel{\Rightarrow}} 
\newcommand\init [1] {} %Initials of names
\newcommand\nothing [1] {}
\begin{document}
\title[Quasiplanar diagrams and slim semimodular lattices]
{Quasiplanar diagrams and slim semimodular lattices}

\author[G.\ Cz\'edli]{G\'abor Cz\'edli}
\email{czedli@math.u-szeged.hu}
\urladdr{http://www.math.u-szeged.hu/$\sim$czedli/}
\address{University of Szeged, Bolyai Institute. 
Szeged, Aradi v\'ertan\'uk tere 1, HUNGARY 6720}

%% Thanks
\thanks{This research was supported by the NFSR of Hungary (OTKA), grant numbers  K77432 and K83219}

%% Dedication 

\subjclass[2010]{Primary 06C10; secondary 06A06, 06A07} 
\nothing{
05E99 Algebraic combinatorics (1991-now) None of the above, but in this section;
06C10 (1980-now) Semimodular lattices, geometric lattices
06A06 (1991-now) Partial order, general
06A07 (1991-now) Combinatorics of partially ordered sets 
}

\keywords{Semimodular lattice, planar lattice, slim lattice, quasiplanar diagram, antimatroid, join-distributive lattice}

\date{December 31, 2012}

\begin{abstract} 
A (Hasse) diagram of a finite partially ordered set (poset) $P$ will be called \emph{quasiplanar} if for any two incomparable elements $u$ and $v$, either $v$ is on the left of all maximal chains containing $u$, or $v$ is on the right of all these chains. Every planar diagram is quasiplanar, and $P$ has a quasiplanar diagram if{f} its order dimension is at most 2. A finite lattice is slim if it is join-generated by the union of two chains. We are interested in diagrams only up to similarity.   The main result gives a bijection between the set of the (similarity classes of) finite quasiplanar diagrams  and that of the (similarity classes of) planar diagrams of finite, slim, semimodular lattices. This bijection allows one to describe finite posets of order dimension at most 2 by finite, slim, semimodular lattices, and conversely. As a corollary, we obtain that there are exactly $(n-2)!$ quasiplanar diagrams of size n.
\end{abstract}

\maketitle
\section{Introduction}\label{introsection}
\subsection{Motivation and aim}
Our original goal was to describe finite, slim, semimodular lattices $L$ by the posets (partially ordered sets) $\Mir L=\tuple{\Mir L;\leq}$ of their meet-irreducible elements. This was motivated by three facts:  there are many results on lattices with unique meet irreducible decompositions,  slim semimodular lattices have intensively been studied recently, and it is well-known that  finite distributive lattices can be described this way.

\init{R.\,P.\ }Dilworth \cite{r:dilworth40} was the first to deal with \emph{unique meet irreducible decompositions} in finite lattices. To give a brief overview, let $\upcover x$ denote the join of all covers of $x$ in a finite lattice $L$. If the interval $[x,\upcover x]$ is distributive for all $x\in L$, then $L$ is a \emph{join-distributive} lattice in nowaday's terminology. There are more than a dozen equivalent definitions of these lattices and two equivalent concepts, antimatroids and convex geometries. \init{R.\,P.\ }Dilworth \cite{r:dilworth40}, who was the first to consider  these lattices,  used  the (equivalent) definition that each element 
can uniquely be decomposed into a meet of meet irreducible elements. The early variants were surveyed in \init{B.\ }Monjardet \cite{monjardet}.  Since it would wander to far if we overviewed the rest, we only mention \init{K.\ }Adaricheva \cite{adaricheva},
\init{H.\ }Abels \cite{abels}, \init{N.\ }Caspard and \init{B.\ }Monjardet \cite{caspardmonjardet}, \init{S.\,P.\ }Avann \cite{avann}, 
\init{R.\,E.\ }Jamison-Waldner  \cite{jamisonwaldner}, and \init{M.\ }Ward \cite{ward}  for additional sources, and \init{M.\ }Stern\ \cite{stern}, 
\init{K.\ }Adaricheva and \init{G.\ }Cz\'edli \cite{adarichevaczg}, and 
\init{G.\ }Cz\'edli \cite{czgcoord} for some recent overviews. However, the reader is not assumed to be familiar with these sources since the present paper is intended to be self-contained for those who know the rudiments of 
Lattice Theory  up to, say, the Jordan-H\"older Theorem for semimodular lattices.
What is mainly important for us is that slim semimodular lattices, to be defined soon, are known to be  join-distributive, see \init{G.\ }Cz\'edli, {L.\ }Ozsv\'art, and  \init{B.\ }Udvari \cite[Corollary 2.2]{czgolub}.

A finite lattice $L$ is \emph{slim}, if $\Jir L$, the set of nonzero join-irreducible elements of $L$, is included in the join of two appropriate chains of $L$; see \init{G.\ }Cz\'edli and \init{E.\,T.\ }Schmidt \cite{czgschtJH}. In the semimodular case, this concept was introduced by \init{G.\ }Gr\"atzer and \init{E.\ }Knapp \cite{gratzerknapp} in a slightly different way. The  theory of slim semimodular lattices has developed a lot recently, as witnessed by 
\init{G.\ }Cz\'edli \cite{czgmatrix}, \cite{czgrepres}, and \cite{czgasympt}, 
\init{G.\ }Cz\'edli, \init{T.\ }D\'ek\'any,  \init{L.\ }Ozsv\'art, \init{N.\ }Szak\'acs, and \init{B.\ }Udvari \cite{czgdekanyatall}, 
\init{G.\ }Cz\'edli and \init{G.\ }Gr\"atzer \cite{czgggresect}, 
\init{G.\ }Cz\'edli, {L.\ }Ozsv\'art, and  \init{B.\ }Udvari \cite{czgolub},
\init{G.\ }Cz\'edli and \init{E.\,T.\ }Schmidt \cite{czgschtJH}, \cite{czgschvisual}, \cite{czgschperm}, and \cite{czgschslim2}, 
\init{G.\ }Gr\"atzer and \init{E.\ }Knapp \cite{gratzerknapp}, \cite{gratzerknapp3}, and \cite{gratzerknapp4}, and \init{E.\,T.\ }Schmidt \cite{schmidtrepres}. In particular, \cite{czgschtJH} gives an application of these lattices outside Lattice Theory while \cite{czgmatrix}, \cite{czgggresect},  \cite{czgschvisual}, \cite{czgschperm}, \cite{czgschslim2}, and \cite{gratzerknapp}, partly of fully, are devoted to their structural descriptions.

All lattices and posets in the paper are assumed to be \emph{finite}, even if this convention is not repeated all the time. We have already mentioned that slim semimodular lattices are join-distributive. This fact, combined with Dilworth's original definition of these lattices, and some recent propositions in \init{G.\ }Cz\'edli \cite{czgcircles} led to our original goal, mentioned at the beginning of the paper.  Since the poset $\Mir L$ does not determine a slim, semimodular lattice $L$ in general, the original target had to be modified.

Slim lattice are planar by \init{G.\ }Cz\'edli and \init{E.\,T.\ }Schmidt \cite[Lemma 2.1]{czgschtJH}, that is, they allow planar (Hasse) diagrams. Although the corresponding posets $\Mir L$ are not planar in general, their appropriate diagrams still have an important property of planar ones; we will coin the name \emph{quasiplanar} to this property. For a first impression, note that all diagrams but $Q_5$ in Figures~\ref{fig2} and \ref{fig3} are quasiplanar; in particular, $Q_3$ is quasiplanar but not planar. 
Now, the modified target is to describe the \emph{planar diagrams} of slim semimodular lattices by quasiplanar diagrams. Of course, diagrams are only considered up to similarity, to be defined  soon. 
The main result of the paper, Theorem~\ref{thmmain}, gives a canonical bijection between the class of planar diagrams of slim semimodular lattices and that of quasiplanar diagrams. This way even the original goal is achieved in a weak sense, because $L$ is described by any of its planar diagram $D$, and $D$ described by a quasiplanar diagram, which is much smaller than $D$ in general. Note that the converse possibility offered by Theorem~\ref{thmmain}, that is the description of quasiplanar diagrams by planar diagrams of slim, semimodular lattices, could also be interesting, because  slim semimodular lattices are well-studied. The strength of this converse option will be demonstrated by Corollary~\ref{corollcount}, which counts quasiplanar diagrams of a given size.

\subsection{Outline}
After recalling or introducing the necessary concepts, 
Section~\ref{conceptsection} formulates the  main result, Theorem~\ref{thmmain}, which asserts that finite, slim, semimodular lattice diagrams and finite quasiplanar diagrams mutually determine each other. Also, this section gives the exact number of $n$-element quasiplanar diagrams, see Corollary~\ref{corollcount}. Section~\ref{proofsection}, which contains many auxiliary statements, is devoted to the proof of Theorem~\ref{thmmain}. Finally, 
Section~\ref{commentsection} contains some comments that shed more light on the main result.

\subsection{Prerequisites} As mentioned already, the reader is not assumed to have deep knowledge of semimodular lattices; a little part of  any book on lattices or particular lattices, including \init{G.\ }Gr\"atzer \cite{GGLT}, \init{J.\,B.\ }Nation \cite{nationbook}, and \init{M.\ }Stern \cite{stern}, is sufficient.

\section{Some concepts and the main result}\label{conceptsection}
\subsection{Quasiplanar diagrams}
A (Hasse) diagram $D$  of a poset $P=\tuple{P;\leq}$ consists of some \emph{points} on the plane, representing the elements of $P$, and \emph{edges}, which are non-horizontal straight line segments connecting two points and represent the covering relation in $P$ in the usual way. Concepts and properties originally defined for posets (and lattices if $P$ happens to be lattice) will also be used for their diagrams; for example, we can speak of a maximal chain of a diagram, and we can say that a lattice diagram is slim and semimodular. A diagram is \emph{planar} if its edges do not intersect, except possibly at their endpoints.  For a more exact definition of planarity and the concepts defined in the next paragraph, the reader can (but need not) resort to  \init{D.\ }Kelly and \init{I.\ }Rival \cite{kellyrival}.

Let $C$ be a maximal chain in a diagram $D$.  This  chain cuts $D$ into
a \emph{left side} and a \emph{right side},  see \init{D.\ }Kelly and \init{I.\ }Rival \cite[Lemma 1.2]{kellyrival}.  (This is so even if $D$ is not planar.) The
intersection of these sides is $C$. If $x\in D$ is on the left side of $C$ but not in $C$, then $x$ is \emph{strictly on the left} of $C$. Let $E$ be another maximal chain of $D$. If all elements of $E$ are on the left of $C$, then $E$ is \emph{on the left of} $C$. In this sense, we can speak of the leftmost maximal chain of $L$, called the \emph{left boundary chain}, and the rightmost maximal chain, called the \emph{right boundary chain}. The union of these two chains is the \emph{boundary} of $L$.
Also, if $F$ is a (not necessarily maximal) chain of $D$, then the \emph{leftmost maximal chain through} $F$ (or extending $F$) and the rightmost one make sense. If $F=\set{f_1<\dots <f_n}$, then the leftmost maximal chain of $D$ through $F$ is the union of the left boundary chains of the subdiagrams  $\ideal {f_1}=\set{x\in D: x\leq f_1}$, $[f_1,f_2]$, \dots, $[f_{n-1},f_n]$, and $\filter{f_n}= \set{x\in D: x\geq f_n}$.  If $F=\set f$ is a singleton, then  chains containing $f$ are said to be chains through $f$ rather than chains through $\set f$. 
The  most frequently used results of \init{D.\ }Kelly and \init{I.\ }Rival  \cite{kellyrival} are the following two.

\begin{lemma}[{\init{D.\ }Kelly and \init{I.\ }Rival  \cite[Lemma 1.2]{kellyrival}}]
\label{krchainlemma}
Let $D$ be a finite, planar lattice diagram, and let $x\leq y\in D$. If $x$ and $y$ are on different sides of a maximal chain $C$ in $L$, then there exists an element $z\in C$ such that $x\leq z\leq y$.
\end{lemma}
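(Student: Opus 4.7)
The plan is to exploit planarity of $D$ in combination with the hypothesis $x\leq y$. The idea is to build a second maximal chain $M$ that actually passes through both $x$ and $y$, and then to argue topologically that $M$ must meet the given chain $C$ at a common vertex lying between $x$ and $y$.

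First, I would fix a saturated chain from $x$ to $y$ (which exists because $x\leq y$ in the lattice) and extend it to a maximal chain $M=\set{\hat0=m_0\prec m_1\prec\cdots\prec m_k=\hat1}$ of $D$, with $x=m_i$ and $y=m_j$ for some $0\leq i\leq j\leq k$. Drawn in the plane, the portion of $M$ from $m_i$ to $m_j$ is a monotone polygonal arc $A$ consisting of the $j-i$ consecutive edges $m_im_{i+1},\,m_{i+1}m_{i+2},\,\dots,\,m_{j-1}m_j$.

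Next, I would invoke the very definition of the two sides of $C$: $D$ splits into a closed left side and a closed right side whose intersection is exactly $C$, and any continuous curve in the plane joining a point strictly on one side to a point strictly on the other is forced to pass through $C$. Since $x$ and $y$ lie on different sides of $C$ by hypothesis, the arc $A$ must meet $C$ in at least one point $p$. At this step planarity becomes essential: since no two edges of $D$ share a point other than a common endpoint, the edges of $A$ and the edges of $C$ can intersect only at common vertices of $D$. Hence $p$ is in fact a vertex, say $p=m_\ell$ with $i\leq\ell\leq j$, and then $z:=m_\ell$ belongs to $C$ and satisfies $x\leq z\leq y$ by monotonicity of $M$.

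The hard part is the topological step, namely ruling out the possibility that $A$ touches $C$ only in the relative interior of some edge and never at a shared vertex; this is precisely where planarity of $D$ is indispensable, and it forces any intersection of the two edge-unions to occur at a genuine element of the poset. The boundary cases where $x$ or $y$ itself happens to lie on $C$ are trivial, since then $z=x$ or $z=y$ already works. Everything else in the argument is straightforward bookkeeping along the maximal chain $M$.
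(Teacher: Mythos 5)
This lemma is not proved in the paper at all; it is imported verbatim from Kelly and Rival \cite[Lemma 1.2]{kellyrival}, and your argument --- extend a saturated chain from $x$ to $y$ to a maximal chain, observe that its edge-arc joins the two sides of $C$ and must therefore meet the curve of $C$, and use planarity to force that meeting point to be a common vertex $z$ with $x\leq z\leq y$ --- is correct and is essentially the standard (Kelly--Rival) proof of this fact. The only ingredient you should acknowledge as an input rather than as something you prove is the separation property itself (a continuous curve from a point strictly on one side of $C$ to a point strictly on the other must meet $C$), which is exactly the description of the two sides that this paper, too, takes as given from Kelly and Rival.
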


\begin{lemma}[{\init{D.\ }Kelly and \init{I.\ }Rival  \cite[Propositions 1.6 and 1.7]{kellyrival}}]\label{leftrightlemma} Let $D$ be finite, planar lattice diagram, and let $x,y\in L$ be incomparable elements. If $x$ is on the left of some maximal chain $($of $D)$  through $y$, then $x$ is on the left of \emph{every} maximal chain through $y$.
\end{lemma}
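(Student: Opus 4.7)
The plan is to argue by contradiction. Suppose $x$ is strictly on the left of some maximal chain $C_1$ through $y$ but strictly on the right of another maximal chain $C_2$ through $y$. First I reduce to the extremal case using the leftmost maximal chain $C_L$ and the rightmost maximal chain $C_R$ through $y$: every chain through $y$ lies weakly between them, so the strict-left region of $C_1$ is contained in that of $C_R$, and no element of $C_R$ is strictly to the left of $C_1$; hence $x$ is strictly left of $C_R$, and symmetrically $x$ is strictly right of $C_L$. Thus $x$ lies in the open strip bounded by $C_L$ and $C_R$.

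Next, exploiting the chain structure of $C_L$, set $z_1=\max(C_L\cap\ideal{x})$ and $z_2=\min(C_L\cap\filter{x})$ (well-defined since $\hat{0},\hat{1}\in C_L$). Because $z_1,z_2,y$ all lie in the chain $C_L$, they are pairwise comparable; since $x$ is incomparable to $y$, we must have $z_1<y<z_2$: were $z_1\geq y$, then $y\leq z_1\leq x$ would violate incomparability, and symmetrically for $z_2$. Hence both $x$ and $y$ lie in the sub-interval $[z_1,z_2]$, whose induced planar diagram contains $C_L\cap[z_1,z_2]$ as a maximal chain through $y$ with $x$ still strictly to its right.

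Finally, to close the contradiction, form a maximal chain $M$ of $D$ through $x$ by taking any maximal chain of $[z_1,z_2]$ through $x$ and extending it by the portions of $C_L$ below $z_1$ and above $z_2$. Because $y\notin M$ (by incomparability), Lemma~\ref{krchainlemma} together with the planarity of the induced diagram on $[z_1,z_2]$ should pin $y$ to a definite side of $M$---the left, since $M$ agrees with $C_L$ outside $[z_1,z_2]$ and runs strictly to the right of $C_L\cap[z_1,z_2]$ inside the interval, while $y\in C_L$. Performing the symmetric construction with $C_R$ in place of $C_L$ yields a maximal chain $M'$ through $x$ with $y$ strictly on its right, producing the desired contradiction. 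I expect this last step to be the main obstacle, since the planarity comparisons among $M$, $M'$, $C_L$, and $C_R$ must be carried out without circularly invoking the very lemma being proved. A clean route is likely by induction on $|D|$: when $[z_1,z_2]\subsetneq D$ one applies the induction hypothesis inside $[z_1,z_2]$, and the degenerate case $z_1=\hat{0}$, $z_2=\hat{1}$ is handled by a direct argument based on boundary chains.
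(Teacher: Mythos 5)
First, a point of comparison: the paper does not prove this statement at all --- it is imported verbatim from Kelly and Rival \cite[Propositions 1.6 and 1.7]{kellyrival}, so there is no in-paper argument to measure yours against. Judged on its own, your attempt has a genuine gap, and you have correctly located it yourself: the final step. The ``contradiction'' you reach --- a maximal chain $M$ through $x$ with $y$ strictly on its left and a maximal chain $M'$ through $x$ with $y$ strictly on its right --- is not a contradiction of anything except the lemma itself, applied with the roles of $x$ and $y$ interchanged. So the argument as written is circular: you reduce the statement to an equivalent instance of the statement. The appeal to ``induction on $|D|$'' does not repair this, because the problematic configuration can already occur in the degenerate case $z_1=\hat 0$, $z_2=\hat 1$, which you defer to ``a direct argument based on boundary chains'' that is never given; that direct argument is precisely the content of the lemma.

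Two intermediate steps are also shakier than they appear. The reduction to the extremal chains assumes that every maximal chain through $y$ lies weakly between the leftmost and the rightmost such chains, and that ``strictly left of $C_1$'' therefore implies ``strictly left of $C_R$''; these transitivity-type facts about the left/right relation among chains are themselves part of what Kelly and Rival establish in Propositions 1.5--1.7 and cannot be taken for free here. Second, the claim that a maximal chain of $[z_1,z_2]$ through $x$ ``runs strictly to the right of $C_L\cap[z_1,z_2]$'' is unjustified: only the single element $x$ is known to be strictly to the right of $C_L$, and nothing prevents other elements of that chain from meeting or crossing to the left of $C_L$ inside the interval, which is exactly the planarity bookkeeping the real proof has to do. An honest proof here needs either the topological argument of Kelly--Rival (using the Jordan-curve separation provided by a maximal chain in a planar diagram, together with their Lemma 1.2 to control where a connecting path can cross) or a complete induction with the base case actually handled; your sketch supplies neither.
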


We will only consider \emph{bounded diagrams}, that is diagrams with 0 and 1, because otherwise the meaning of the left or right side of a maximal chain, which is possibly a singleton, is less pictorial. 
Note, however, that this paper could easily be translated to the ``not necessarily bounded setting'' by defining quasiplanar diagrams as $P\setminus\set{0,1}$ subdiagrams of bounded quasiplanar diagrams $P$. Let us emphasize that a quasiplanar diagram always has 0 and 1 by definition.
By the following definition, 
Lemma~\ref{leftrightlemma} will hold but Lemma~\ref{krchainlemma} may fail for those poset diagrams that play a crucial role in the paper.

\begin{definition}\  
\begin{enumeratei}
\item A diagram $D$ is \emph{quasiplanar} if it is bounded and, in addition,  for any two incomparable $x,y\in D$, whenever $x$ is on the left of \emph{some} maximal chain  through $y$, then $x$ is on the left of \emph{every} maximal chain through $y$.
\item For $x,y$ in a quasiplanar diagram $D$, $x$ is \emph{on the left} of $y$, in notation $x\lft y$, if $x\parallel y$ and $x$ is on the left of some (equivalently, every) maximal chain through $y$. The relation $x\rght y$, worded as $x$ is \emph{on the right} of $y$, is defined analogously.
\end{enumeratei}
\end{definition}
Let us emphasize that whenever left, right, $\slft$, or $\srght$ is used for two elements, then the elements in question are incomparable. Therefore, for example, the implication $x\lft y\then x\parallel y$ holds throughout the paper. 
By Lemma~\ref{leftrightlemma}, every planar lattice diagram is quasiplanar. Since planar bounded diagrams are lattice diagrams by \init{D.\ }Kelly and \init{I.\ }Rival  \cite[Corollary 2.4]{kellyrival}, a planar bounded diagram is necessarily quasiplanar.
The following statement is an obvious extension of Proposition 1.7 in 
\init{D.\ }Kelly and \init{I.\ }Rival  \cite{kellyrival}; its last part follows by considering a maximal chain through $\set{y,a}$.

\begin{lemma}\label{lfrhtrRlemma}
Let $x,y,z,a$, $b$, and $c$ be elements of a quasiplanar diagram. Then the following hold.
\begin{enumeratei}
\item\label{lfrhtrRlemmaa} If $x\lft y$ and $y\lft z$, then $x\lft z$.
\item\label{lfrhtrRlemmab} If $a\parallel b$, then either $a\lft b$, or $b\lft a$. 
\item\label{lfrhtrRlemmac} If $x\lft y$ and $a\nonparallel y$, then either $x\lft a$, or $a\nonparallel x$.
\end{enumeratei}
\end{lemma}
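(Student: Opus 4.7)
My plan is to prove the three parts in the order (ii), (i), (iii), since (ii) supplies the asymmetry of $\lft$ needed in the proof of (i), and (iii) is essentially what the author's hint gives us. The whole argument mirrors that of Kelly and Rival's Proposition 1.7 in the planar lattice case, with the quasiplanarity axiom replacing planarity.

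For part (ii), let $a \parallel b$ and take any maximal chain $C$ through $a$. Since $a \parallel b$, we have $b \notin C$, so $b$ lies strictly on one side of $C$. If $b$ is on the left of $C$, then $b \lft a$ directly from the quasiplanarity axiom. If $b$ is on the right of $C$, the left--right mirror of the axiom (obtained by reflecting the diagram) places $b$ on the right of every maximal chain through $a$, and a plane-topology argument in the spirit of Kelly--Rival, via any maximal chain through $b$, converts this into $a \lft b$. These two cases are mutually exclusive, which yields both the dichotomy and the asymmetry $x \lft y \then \neg(y \lft x)$ needed below.

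For part (i), suppose $x \lft y$ and $y \lft z$. First I would show $x \parallel z$. If $x < z$, pick a maximal chain $C'$ through both $x$ and $z$: as a chain through $z$ it has $y$ strictly on its left by $y \lft z$, but as a chain through $x$ it gives $y \lft x$, contradicting the asymmetry from (ii) together with $x \lft y$. The cases $z < x$ and $x = z$ are handled analogously. Given $x \parallel z$, I would prove $x \lft z$ by contradiction: if not, (ii) gives $z \lft x$, producing the ``left-cycle'' $x \lft y \lft z \lft x$. Combining maximal chains through $x$, $y$, and $z$ and tracking the positions of these three elements relative to them, one extracts a plane-topology contradiction in the Kelly--Rival style.

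For part (iii), I follow the author's hint: since $a \nonparallel y$, there exists a maximal chain $C$ through both $y$ and $a$; by $x \lft y$, $x$ lies on the left of $C$. If $x \in C$, then $x$ and $a$ share a chain, so $x \nonparallel a$. Otherwise $x$ is strictly on the left of $C$, which is a maximal chain through $a$; then either $x \nonparallel a$ (the second alternative), or $x \parallel a$ and quasiplanarity gives $x \lft a$ (the first alternative). The principal obstacle is the cycle step in (i): quasiplanarity only guarantees consistency of ``left'' across maximal chains through a single fixed element, whereas the cycle contradiction must compare positions across chains through three different elements, so it has to be drawn from the plane-embedding data of the diagram rather than from the axiom alone.
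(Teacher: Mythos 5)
Your part (iii) is exactly the paper's intended argument (the paper's entire justification for that part is the hint ``consider a maximal chain through $\set{y,a}$''), and it is correct as written: a maximal chain $C$ extending the chain $\set{a,y}$ has $x$ on its left by quasiplanarity applied to $x\lft y$, and then either $x\in C$ (so $x\nonparallel a$) or $x$ is strictly left of a chain through $a$, giving $x\lft a$ when $x\parallel a$. The trouble is in (i) and (ii), where the steps carrying all the mathematical content are announced but not performed. In (ii), the implication ``$b$ strictly on the right of every maximal chain through $a$ $\then$ $a\lft b$'' is precisely what needs proving: quasiplanarity only synchronizes the side of $b$ across chains through the \emph{fixed} element $a$, and, separately, the side of $a$ across chains through the fixed element $b$; nothing in the axiom ties the two families together, so a priori all four combinations (left/right of chains through $a$ crossed with left/right of chains through $b$) remain open, and both statement (ii) and the asymmetry $x\lft y\then\neg(y\lft x)$ that your proof of (i) leans on amount to excluding the two same-handed combinations. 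Your remark that your two cases ``are mutually exclusive'' does not supply this: the exclusivity needed is between $a\lft b$ and $b\lft a$, not between the two branches of your case split. Likewise the exclusion of the cycle $x\lft y\lft z\lft x$ in (i) is deferred to ``a plane-topology contradiction in the Kelly--Rival style'' that is never carried out --- and, as your own closing sentence concedes, Kelly--Rival's Proposition 1.7 is proved for \emph{planar lattice} diagrams, which is exactly the hypothesis you no longer have.

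In fairness, the paper is no more forthcoming: it calls the lemma ``an obvious extension'' of Kelly--Rival and only hints at (iii), so you have matched its level of detail on (iii) and honestly located, without closing, the same gap it waves away. A complete argument for (i)--(ii) would have to come from the geometry of the diagram rather than from the axiom: maximal chains are $y$-monotone curves joining the points $0$ and $1$, so two chains $C\ni b$ and $C'\ni a$ can be compared height by height via $h\mapsto x_{C'}(h)-x_C(h)$; the forbidden configurations force this function to change sign, hence the curves to meet at an interior point, and one must then show that such a meeting produces either a common element of both chains (contradicting the assumed incomparabilities) or a violation of quasiplanarity at an endpoint of one of the crossing edges. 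Until that step is written out, parts (i) and (ii) are not proved.
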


If $D_1$ and $D_2$ are quasiplanar diagrams and there exists a bijection $\psi\colon D_1\to D_2$ such that $\psi$ is an order isomorphism and, for any $x,y\in D_1$, $x\lft y$ in $D_1$ if{f} $\psi(x)\lft\psi(y)$ in $D_2$, then $D_1$ and $D_2$ are \emph{similar diagrams}
and $\psi$ is a \emph{similarity map}.  For lattice diagrams, 
similarity means the same as in \init{D.\ }Kelly and \init{I.\ }Rival  \cite{kellyrival}. We consider quasiplanar diagrams up to similarity; that is, similar diagrams will always be treated as equal ones, even if this is not repeated all the time. An important tool to recognize similarity is given in the following lemma, which is taken from \init{G.\ }Cz\'edli and \init{E.\,T.\ }Schmidt  and \cite[Lemma 4.7]{czgschslim2} or, more explicitly, 
\init{G.\ }Cz\'edli and \init{G.\ }Gr\"atzer \cite{czgggltsta}.

\begin{lemma}\label{lemmaleftbounddeterm} Let $D_1$ and $D_2$ be slim,  semimodular lattice diagrams. If there exists an order-isomorphism $\psi\colon D_1\to D_2$ such that $\psi$ maps the left boundary chain of $D_1$ to the left boundary chain of $D_2$, then 
$D_1$ and $D_2$ are similar diagrams and $\psi$ is a \emph{similarity map}. 
\end{lemma}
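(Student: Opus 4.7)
The plan is to show that for any two incomparable elements of $D_1$, the relation $\lft$ is determined by the lattice order together with the position of the left boundary chain alone; once this is established, the hypotheses on $\psi$ immediately imply similarity.

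As a base case I would first handle the situation where one of the two elements lies on the left boundary chain. Concretely, I claim that if $z$ belongs to the left boundary chain of a slim semimodular diagram $D$ and $x\parallel z$, then $z\lft x$. The argument uses Lemma~\ref{leftrightlemma}: pick any maximal chain $C$ through $x$; since $x\notin$~left boundary (the left boundary is a chain and $z\parallel x$), $C$ is not the left boundary chain, and by the definition of the left boundary chain $z$ is weakly to the left of $C$; as $z\notin C$ (again because $z\parallel x$), $z$ is strictly to the left of $C$, giving $z\lft x$. Since $\psi$ is an order-isomorphism carrying the left boundary of $D_1$ onto the left boundary of $D_2$, this observation is preserved by $\psi$, so the lemma holds whenever at least one of the two incomparable elements is on the left boundary chain.

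For the general case I would reduce to the base case by exploiting slimness. In a slim semimodular lattice every join-irreducible element sits on a boundary chain (by \cite{czgschtJH}). Thus for $x,y\in D_1$ with $x\parallel y$, the element $x$ is the join of the join-irreducibles below it, some of which lie on the left boundary chain and some on the right boundary chain; the maximal left-boundary element below $x$, call it $\lmost x$, is order-theoretically definable from the lattice together with the left boundary. The strategy is to show that $x\lft y$ is equivalent to an order-theoretic condition comparing the ``left shadows'' $\lmost x$ and $\lmost y$ (and, if necessary, such shadows inside the intervals $[x\wedge y, x\vee y]$), combined with transitivity of $\lft$ from Lemma~\ref{lfrhtrRlemma}(i) and the base case applied to boundary witnesses in these intervals. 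Because both $D_1$ and $D_2$ are slim semimodular, the same characterization applies in $D_2$, and $\psi$ transports everything involved.

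The main obstacle is the last reduction: making precise how the purely geometric relation $x\lft y$ can be recovered from the lattice plus left boundary without reference to an actual drawing. I would tackle this by induction on $|D_1|$, removing a doubly irreducible corner element or applying the structural theory of trajectories of prime intervals in slim semimodular lattices (as developed in \cite{czgschperm} and \cite{czgschslim2}): trajectories are intrinsic to the lattice, each trajectory meets every maximal chain in exactly one edge, and the left-to-right ordering of edges on a maximal chain is controlled by the trajectories crossing it. Since $\psi$ preserves both the lattice and the leftmost maximal chain, it preserves the way trajectories are ordered left-to-right along every maximal chain, which is exactly what similarity demands.
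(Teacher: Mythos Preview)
The paper does not supply its own proof of this lemma; it is imported verbatim from \cite[Lemma~4.7]{czgschslim2} and \cite{czgggltsta}. There is therefore no in-paper argument to compare your proposal against, and your task was really to reconstruct a proof from scratch.

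Your base case is correct and cleanly argued. The general reduction, however, remains a plan rather than a proof: you introduce the ``left shadow'' $\lmost x$ (which is exactly the left support $\lsp x$ of Definition~\ref{sprtdeF}) and then list several possible strategies---comparison of shadows, induction on $|D_1|$, trajectory theory---without carrying any one of them through. In particular you never state, let alone prove, a precise criterion that decides $x\lft y$ from the order together with the left boundary chain alone. That is the genuine gap.

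The paper in fact contains the missing ingredient. Lemma~\ref{lemmalsprsplambda}, proved later and independently of the present lemma, gives $x\lft y\iff \lsp x>\lsp y$ and $\rsp x<\rsp y$ for incomparable $x,y$; its proof also shows that $\lsp x=\lsp y$ forces $x\nonparallel y$. Hence for $x\parallel y$ the left supports are distinct, and since exactly one of $x\lft y$, $y\lft x$ holds, one obtains the clean equivalence
\[
x\lft y\ \Longleftrightarrow\ \lsp x>\lsp y\qquad(x\parallel y).
\]
Because $\lsp{}$ is defined purely from the lattice order and the left boundary chain, $\psi$ preserves it, and similarity follows immediately. This is presumably what your ``left shadow'' idea was aiming at; once you see it, the detours through trajectories or through an induction on $|D_1|$ become unnecessary.
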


\subsection{The key constructions}
Before formulating the main result, we have to give the basic constructions. It is not so trivial that our constructs exist and have the desired properties, but this will be proved later, in due time.

\begin{definition}\label{alfaDdef} Let $D$ and $Q$  be a planar lattice diagram and a quasiplanar diagram, respectively. We say that $Q$ is the \emph{quasiplanar diagram associated} with $D$ if the following hold.
\begin{enumeratei}
\item $Q=\set{1,\nnul}\cup \Mir D$, where $1\in D$, $\nnul\notin D$;
\item for $x,y\in Q$,  $x\leq y$ in $Q$ if{f} $x\leq y$ in $D$ or $x=\nnul$;
\item for any two incomparable $x,y\in Q$, we have $x\lft y$ in $Q$ if{f} $x\lft y$ in $D$.
\end{enumeratei}
If $Q$ above exists, then it is clearly unique up to similarity; it is denoted by $\alpha(D)$.
\end{definition}

We do not claim that $Q$ above exists for every $D$. As usual, the equality relation on a diagram $Q$ is denoted by $\omega_Q$.  If $\slft=\slft_Q$ is the relation ``on the left'' on $Q$, then $\eslft$ denotes the relation $\slft\cup\omega_Q$, and we also have $\esrght=\srght\cup\omega_Q$. In particular, $x\elft y$ means that either $x=y$, or $x\parallel y$ and $x$ is on the left of $y$. We define the relations $\leqslft$, $\geqslft$, $\leslft$,  $\bigglft$, 
$\leqrght$, $\geqrght$, $\lesrght$, and $\biggrght$ analogously; for example, 
$x\leqlft y$ means that $x\leq y$ or $x\lft y$, and   $x\biggrght y$ means $x> y$ or $x\rght y$.
Next, we start from a quasiplanar diagram, and want to define a slim semimodular lattice diagram. 

\begin{definition}\label{defbetaegy} For a quasiplanar diagram $Q$, let $\plu Q=Q\setminus\set{0}$. A pair $\pair xy\in \plu Q\times \plu Q$ is a \emph{$\eslft$-pair} if 
$x\elft y$. The set of these pairs is denoted by $\Elig(Q)$. (At set theoretical level, $\Elig(Q)$ is the same as $\eslft$.)
For $\pair{x_1}{y_1}, \pair{x_2}{y_2}\in \Elig(Q)$, we define  
{\allowdisplaybreaks{
\begin{align}
\label{defbetaegya} 
\pair{x_1}{y_1} \leq \pair{x_2}{y_2} &
\overset{\text{def}}\Longleftrightarrow 
x_1 \leqlft x_2\text{ and }y_2\geqlft y_1\text{, and}\\
\label{defbetaegyb} 
\pair{x_1}{y_1} \lft \pair{x_2}{y_2} &
\overset{\text{def}}\Longleftrightarrow  x_1 \leslft x_2\text{ and }y_1\bigglft y_2\text.
\end{align}}}%
(Note that $y_2\geqlft y_1$ in \eqref{defbetaegya}  is equivalent to $y_1\leqrght y_2$.)
Let $\beta_1(Q)$ be the unique  planar diagram of $\tuple{\Elig(Q);\leq}$, where ``$\leq$'' is given by \eqref{defbetaegya}, such that the ``on the left'' relation of $\beta_1(Q)$ is described by \eqref{defbetaegyb}. (We will prove that such a diagram exists; its uniqueness is obvious.)
\end{definition}

The advantage of Definition~\ref{defbetaegy} is that $\eslft$-pairs are relatively simple objects and $\lft$ in $\beta_1(Q)$ is quite explicitly described. However,
we will also benefit from the 
 the following approach in our proofs.

\begin{definition}\label{defbetaket} Let $Q$ be a quasiplanar diagram, and let $\plu Q=Q\setminus\set 0$.
\begin{enumeratei}
\item\label{defbetaketa} A nonempty subset $X$ of $\plu Q$ is called a \emph{proper horizontally convex order filter}, in short a \emph{hco-filter}, of $Q$ if 
\begin{itemize}
\item  $X$ is an up-set, that is, $x\in X$, $y\in Q$, and $x\leq y$ implies $y\in X$, and 
\item $X$ is horizontally convex, that is, if $x\lft y$, $y\lft z$, and $\set{x,z}\subseteq X$, then $y\in X$.
\end{itemize}
\item\label{defbetaketb}  For $Y\subseteq \plu Q$, the least hco-filter including $Y$ is denoted by $\hcofilter Y=\phcofilter QY$; we write $\hcofilter y$ instead of $\hcofilter{\set y}$. 
\item\label{defbetaketc}  The set of hco-filters of $Q$ is denoted by $\Hco Q$. For $X,Y\in\Hco Q$, let $X\dleq Y$ mean $X\supseteq Y$; the poset $\tuple{\Hco Q;\dleq}$ is also denoted by $\Hco Q$.
\item\label{defbetaketd}  We define a finite sequence of hco-filters $\vec F(Q)=\vec F=(F_0,F_1, \ldots, F_{|Q|-2})$ by induction as follows. Let $F_0=\set 1$. If $F_n$ is defined and $\plu Q\setminus F_n\neq\emptyset$, then
let $f_n$ be the leftmost element
in the set $\Max(Q\setminus F_n)$ of maximal elements of $Q\setminus F_n$, and let $F_{n+1}=F_n\cup\set {f_n}$.
\item\label{defbetakete}  We also define the ``left-right dual'' version $\vec G(Q)=\vec G=(G_0,G_1, \ldots, G_{|Q|-2})$ of $\vec F$ by induction as follows.   Let $G_0=\set 1$. While $\plu Q\setminus G_n\neq\emptyset$, denote 
by $g_n$  the rightmost element
in the set $\Max(Q\setminus G_n)$ of maximal elements of $Q\setminus G_n$, and let $G_{n+1}=G_n\cup\set {G_n}$.
\item\label{defbetaketf}  Let $\beta_2(Q)$ be the planar lattice diagram of $\tuple{\Hco Q;\dleq}$ such that $\vec F$ and $\vec G$ are the left boundary chain and the right boundary chain, respectively. (We will show later that this makes sense.)
\end{enumeratei}
\end{definition}

\subsection{The results}
In order to take  Definitions~\ref{defbetaegy} and \ref{defbetaket} into account independently, the main theorem below contains a parameter $p\in\set{1,2}$.

\begin{theorem}[Main Theorem]\label{thmmain}
Let $D$ be a finite, slim, semimodular lattice diagram, and let $Q$ be a finite quasiplanar diagram. Let $p\in\set{1,2}$. Then the following hold.
\begin{enumeratei}
\item\label{thmmaina} $\alpha(D)$ is a finite quasiplanar diagram.
\item\label{thmmainb} $\beta_p(Q)$ is a finite, slim, semimodular lattice diagram.
\item\label{thmmainc} Up to similarity, $\beta_p(\alpha(D))$ equals $D$.
\item\label{thmmaind} Up to similarity, $\alpha(\beta_p(Q))$ equals $Q$.
\end{enumeratei}
\end{theorem}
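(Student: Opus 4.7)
The plan is to prove the theorem first for $p=2$ and then transfer everything to $p=1$ by exhibiting a natural similarity $\gamma\colon\beta_1(Q)\to\beta_2(Q)$. I would define $\gamma$ by sending each $\eslft$-pair $\pair xy$ to the hco-filter $\hcofilter{\set{x,y}}$, with inverse sending an hco-filter $X$ to the pair formed by its leftmost and its rightmost minimal elements. Granting this identification, part (ii) for $p=2$ will yield part (ii) for $p=1$, and the verifications in (iii) and (iv) will only need to be carried out in one of the two models.

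For part (ii), which I expect to be the main obstacle, I would first establish that $\Hco Q$ is a finite lattice under $\dleq$: the intersection $X\cap Y$ of two hco-filters is again an hco-filter (it contains $1$, and both the up-set property and horizontal convexity are intersection-closed), yielding $X\vee_{\dleq}Y=X\cap Y$; the meet $X\wedge_{\dleq}Y$ is the hco-closure $\hcofilter{X\cup Y}$. The bottom and top in $\dleq$ are $\plu Q$ and $\set 1$. Next I would analyse upper covers: $X$ is covered by $X\setminus\set x$ in $\dleq$ exactly when $x$ is a minimal element of $X$ whose deletion preserves horizontal convexity. From this local description I would deduce semimodularity by the standard ``simultaneously remove two removable minima'' argument, and verify that the join-irreducibles of $\Hco Q$ lie in the union of the two sequences $\vec F$ and $\vec G$ from Definition~\ref{defbetaket}, so that the lattice is slim. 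Finally I would construct the planar drawing by placing the elements of $\vec F$ along the left boundary and those of $\vec G$ along the right, orienting each interior hco-filter by the $\elft$-position of its leftmost minimum, and I would invoke Lemma~\ref{lemmaleftbounddeterm} for uniqueness up to similarity.

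Part (i) I would handle by inheriting the geometric positions of $\Mir D\cup\set 1$ from the planar diagram of $D$ and inserting $\nnul$ at the bottom. The subtle point is that new maximal chains arise in $\alpha(D)$ when intermediate non-meet-irreducible elements of $D$ are suppressed. Quasiplanarity is then proved as follows: given incomparable $x,y\in\Mir D$ and a maximal chain $C$ of $\alpha(D)$ through $y$, one reinserts intermediate elements of $D$ to extend $C$ to a maximal chain $\widehat C$ of $D$ through $y$, so that Lemma~\ref{leftrightlemma} applied in $D$ produces the required agreement of ``$x$ is on the left'' across all such chains, which then restricts back to $\alpha(D)$.

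Parts (iii) and (iv) are verifications of inverse constructions. For (iii) I would send $d\in D$ to $\set{m\in\Mir D:m\geq d}\cup\set 1$ and check that this is a lattice isomorphism $D\to\beta_2(\alpha(D))$ that carries the left boundary chain of $D$ to that of $\beta_2(\alpha(D))$; Lemma~\ref{lemmaleftbounddeterm} then upgrades it to a similarity. For (iv) I would identify the meet-irreducibles of $\beta_2(Q)$ as the principal hco-filters $\hcofilter x$ for $x\in\plu Q\setminus\set 1$, and check that the bijection $x\mapsto\hcofilter x$ (together with $\nnul\mapsto\plu Q$ and $1\mapsto\set 1$) preserves both the order and the relation $\slft$. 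The core difficulty throughout is the combinatorial analysis in part (ii): characterising removable minima sharply enough to secure semimodularity, slimness, and a coherent planar drawing with $\vec F$ and $\vec G$ as boundary chains.
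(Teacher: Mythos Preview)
Your overall architecture matches the paper's: the bijection $\pair xy\mapsto\hcofilter{\set{x,y}}$ is exactly the map $\eptofilt$ of Lemma~\ref{phipilemMa}, and your outlines for parts (iii) and (iv) are close in spirit to what the paper does (the paper works in $\beta_1$ via the dual supports $\lds d,\rds d$ and Lemma~\ref{lMadsplrdu} rather than directly in $\beta_2$, but the two routes are interchangeable once $\eptofilt$ is shown to be a similarity). For part (ii) the paper takes a shortcut you might prefer to a direct cover analysis: it checks that the complements $\plu Q\setminus F$ of hco-filters form an antimatroid (feasibility is the substantial step, handled in Lemma~\ref{nhVbztZTsV} by a three-case argument), whence $\tuple{\Hco Q;\dleq}$ is join-distributive and in particular semimodular; slimness then follows from Lemma~\ref{lMvecFvecBjoIn}, which shows that every element equals some $F_i\vee G_j$.

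There is, however, a genuine gap in your plan for part (i). You propose to build $\alpha(D)$ by keeping the positions of $\Mir D\cup\set1$ as inherited from $D$, drawing the new straight edges, and then arguing that each maximal chain $C$ of $\alpha(D)$ through $y$ refines to a maximal chain $\widehat C$ of $D$ through $y$ so that ``$x$ is on the left of $C$'' agrees with ``$x$ is on the left of $\widehat C$''. That transfer is precisely what fails: between consecutive elements $u\prec v$ of $C$ in $\alpha(D)$, the new straight segment from $u$ to $v$ need not lie on the same side of $x$ as the maximal chains of $[u,v]_D$, because the latter may zigzag across it. The paper exhibits this explicitly in Figure~\ref{fig3}: $D_4$ and $D_5$ are similar slim semimodular diagrams, and the diagram $Q_5$ produced from $D_5$ by exactly your inherited-position recipe is \emph{not} quasiplanar (there $c\parallel a$, yet $c$ lies on the left of the chain $\set{0,a,f,1}$ and on the right of $\set{0,a,d,1}$). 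Thus inheriting positions from an arbitrary planar drawing of $D$ does not yield a quasiplanar diagram.

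The paper's remedy (Lemma~\ref{lemmaalfadefok}) is to first realise the lattice of $D$ by a diagram $E$ sitting inside a grid with all edges of slope $\pm45^\circ$; in that drawing every chain through $y$, whether in $E$ or in the restricted diagram on $\Mir E$, is geometrically confined to the union of the two grid-rectangles $\ideal y$ and $\filter y$, so an incomparable $x$ lies on a single side determined by its horizontal coordinate alone. One then matches $E$ back to the given $D$ block-by-block via Lemma~\ref{lemMaVflip} (possibly applying a vertical flip on some $\Nar D$-blocks). Without this preprocessing step your argument for part (i) does not go through.
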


We now from \init{G.\ }Cz\'edli and \init{E.\,T.\ }Schmidt~\cite{czgschperm}, see also \init{G.\ }Cz\'edli and \init{G.\ }Gr\"atzer \cite{czgggltsta}, that there exists a bijection between the set of slim semimodular lattice diagrams of length $n$ and the set $S_n$ of permutations acting on $\set{1,\ldots,n}$. Therefore, Theorem~\ref{thmmain} immediately implies the following statement. Let us emphasize that quasiplanar diagrams are bounded by definition.

\begin{corollary}\label{corollcount} Up to similarity, the number of $n$-element quasiplanar diagrams is $(n-2)!$.
\end{corollary}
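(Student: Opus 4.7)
The plan is to chain Theorem~\ref{thmmain} together with the permutation enumeration of Cz\'edli--Schmidt quoted just before the statement. The first step is to use $\alpha$ to convert the problem into counting slim semimodular lattice diagrams. By Definition~\ref{alfaDdef}(i), the underlying set of $\alpha(D)$ is $\set{1,\nnul}\cup\Mir D$, with $1\notin\Mir D$ and $\nnul\notin D$, whence $|\alpha(D)|=|\Mir D|+2$. Combined with Theorem~\ref{thmmain}, this shows that $\alpha$ restricts to a bijection between similarity classes of finite slim semimodular lattice diagrams $D$ with $|\Mir D|=n-2$ and similarity classes of $n$-element quasiplanar diagrams.

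The second step is to trade the cardinality condition $|\Mir D|=n-2$ for the length condition $\length D=n-2$. I would invoke the standard identity $|\Mir D|=\length D$ for finite slim semimodular lattice diagrams $D$, which is a well-known structural fact about such lattices; it can, for instance, be read off from \cite{czgschslim2} or \cite{czgmatrix} among the references cited in the introduction, and it is easy to verify on the smallest cases (for example, the $3$-element chain, $B_2$, and the ``kite'' all satisfy it). With this identity in hand, counting $n$-element quasiplanar diagrams reduces to counting slim semimodular lattice diagrams of length $n-2$, which by the cited Cz\'edli--Schmidt bijection with $S_{n-2}$ is exactly $(n-2)!$.

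Since the bijection of Theorem~\ref{thmmain} and the permutation count are already in hand, no substantive difficulty is expected. The only ingredient not explicitly included in the excerpt is the equality $|\Mir D|=\length D$; this is the ``main obstacle,'' though a very minor one, being a standard and elementary property of slim semimodular lattices. A sanity check in small cases (for $n\in\{2,3,4\}$ the formula yields $1,1,2$, matching the enumerations of the $2$-element, the $3$-element chain, and the chain together with $B_2$-shaped quasiplanar diagrams, respectively) further supports that the bookkeeping is correct.
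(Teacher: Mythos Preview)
Your proof is correct and follows essentially the same approach as the paper: the paper simply asserts that Theorem~\ref{thmmain} combined with the Cz\'edli--Schmidt bijection between slim semimodular lattice diagrams of length $n$ and $S_n$ ``immediately implies'' the corollary, while you have spelled out the bookkeeping (namely $|\alpha(D)|=|\Mir D|+2$ and the standard identity $|\Mir D|=\length D$ for slim semimodular, hence join-distributive, lattices) that the paper leaves to the reader.
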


\section{Auxiliary statements and proofs}\label{proofsection}

\subsection{Statements on quasiplanar diagrams}
Let $Q$ be a quasiplanar diagram, and  let $F\in \Hco Q$ be a hco-filter.
The set of minimal elements of $F$ is denoted by $\Min F$. It is an antichain, so it has a unique leftmost element $\lmost F$, and a unique rightmost element $\rmost F$. The are called the \emph{leftmost bottom element} and the \emph{rightmost bottom element} of $F$, respectively. Clearly, $\lmost F\elft \rmost F$. If $\pair xy\in\Elig(Q)$, then we often use the following notation
\begin{align*}
\Betw xy&=\set{z: x\elft z\text{ and }z\elft y} \text{ and }\cr
\MinBetw xy&=\Min\set{z: x\elft z\text{ and }z\elft y},
\end{align*}
where for an $A\subseteq Q$, $\Min A$ denotes the set of minimal elements of $A$. Since $x\elft y$, the set $\MinBetw xy$ is not empty. 
For $U\subseteq Q$, $\filter U$ denotes the order filter $\set{z\in Q: z\geq u\text{ holds for some }u\in U}$ generated by $U$.

\begin{lemma}\label{sdGT} If $Q$ is a quasiplanar diagram, then for any $\pair{x}{y}\in \Elig(Q)$, we have 
\begin{align}
&\hcofilter{\set{x,y}}=\filter \MinBetw xy\text{ and, in particular, }\hcofilter x=\filter x;  \label{sdGTa}\\
&x=\lmost{\hcofilter{\set{x,y}}}, \quad  \quad y=\rmost{\hcofilter{\set{x,y}}};\label{sdGTb}\\
&\Min (\hcofilter{\set{x,y}}) = \MinBetw xy\text.\label{sdGTc}
\end{align} 
\end{lemma}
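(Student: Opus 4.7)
The plan is to prove \eqref{sdGTa} first and then read off \eqref{sdGTc} and \eqref{sdGTb}; the ``in particular'' clause is the special case $y=x$, where antisymmetry of $\elft$ forces $\Betw xx=\set x$. Two routine facts will be used repeatedly: $x,y\in\MinBetw xy$, because any $z\in\Betw xy$ with $z\leq x$ satisfies $x\elft z$ and so $z=x$ (and symmetrically for $y$); and $\MinBetw xy$ is an antichain, totally $\lft$-ordered by Lemma~\ref{lfrhtrRlemma}(ii), with $x$ leftmost and $y$ rightmost.

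For the easy inclusion $\hcofilter{\set{x,y}}\supseteq\filter{\MinBetw xy}$, it suffices, by the up-set property of hco-filters, to show $\MinBetw xy\subseteq\hcofilter{\set{x,y}}$; any $m\in\MinBetw xy\setminus\set{x,y}$ satisfies $x\lft m\lft y$, so horizontal convexity of $\hcofilter{\set{x,y}}$ places $m$ inside.

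The heart of the proof is the opposite inclusion, established by checking that $\filter{\MinBetw xy}$ is itself an hco-filter containing $\set{x,y}$; the nontrivial requirement is horizontal convexity. Suppose $a\lft b\lft c$ with $a,c\in\filter{\MinBetw xy}$, and fix $a'\leq a$ and $c'\leq c$ in $\MinBetw xy$. On the right, Lemma~\ref{lfrhtrRlemma}(iii) applied to $b\lft c$ with $c'\leq c$ gives either $b\lft c'$ (whence $b\lft y$ by part~(i), using $c'\elft y$) or $c'\nonparallel b$, which forces $c'\leq b$ since $c'\geq b$ would yield $b\leq c'\leq c$, contradicting $b\lft c$. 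On the left, I need to show $x\leq b$ or $x\lft b$. If $x\lft a'$, then Lemma~\ref{lfrhtrRlemma}(iii) applied with $a\nonparallel a'$ yields either $x\lft a$ (and part~(i) gives $x\lft b$) or $a\nonparallel x$, which forces $x\leq a$ since $a\leq x$ would contradict $x\lft a'$; the case $x=a'$ also yields $x\leq a$. So we may assume $x\leq a$; then $x\leq b$ is the easy outcome, $x\geq b$ would give $b\leq a$ contradicting $a\parallel b$, and the remaining subtle possibility $x\parallel b$ is dispatched by contradiction: were $b\lft x$, applying Lemma~\ref{lfrhtrRlemma}(iii) to $b\lft x$ with $a\nonparallel x$ would produce $b\lft a$ or $a\nonparallel b$, each incompatible with $a\lft b$. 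Collecting cases, one of the following holds: $b\geq x$, $b\geq c'$, or both $x\lft b$ and $b\lft y$; in the last case $b\in\Betw xy$ lies above some element of $\MinBetw xy$, so in every case $b\in\filter{\MinBetw xy}$.

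Once \eqref{sdGTa} is established, \eqref{sdGTc} follows immediately because $\Min\filter S=S$ for any antichain $S$, and \eqref{sdGTb} follows because $x$ and $y$ are the $\lft$-extremes of the totally-ordered antichain $\MinBetw xy$. The principal obstacle is the horizontal convexity verification above, where the failure of Lemma~\ref{krchainlemma} in the merely quasiplanar setting forces all geometric content to be extracted entirely through Lemma~\ref{lfrhtrRlemma}.
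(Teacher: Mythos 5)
Your proof is correct and takes essentially the same route as the paper: the nontrivial inclusion of \eqref{sdGTa} is obtained by checking that $\filter{\MinBetw xy}$ is horizontally convex using Lemma~\ref{lfrhtrRlemma} (transitivity plus part (iii)), and \eqref{sdGTb}--\eqref{sdGTc} then follow from $\Min(\filter A)=A$ for an antichain $A$. The only cosmetic difference is on the left-hand side of the convexity check, where the paper simply applies the left-right dual of its right-hand argument to reduce to $a'\lft b\lft c'$ and then invokes transitivity, whereas you compare $x$ directly with $a'$, $a$, and $b$ in a slightly longer but equivalent case split.
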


\begin{proof} The ``$\supseteq$''  inclusion in the first equation of \eqref{sdGTa} is obvious. Assume that $u_1,u_2\in \filter \MinBetw xy$, $u\in Q$, and $u_1\lft u\lft u_2$.
We want to show $u\in \filter \MinBetw xy$.   
There are $v_1,v_2\in \MinBetw xy$ such that $v_1\leq u_1$ and $v_2\leq u_2$. By Lemma~\ref{lfrhtrRlemma}\eqref{lfrhtrRlemmac}, either $u\lft v_2$ or $u\nonparallel v_2$. Now $u\leq v_2$ would give $u\leq u_2$, which would contradict $u\lft u_2$. If we had $u\geq v_2$, then $u\in  \filter \MinBetw xy$ would trivially hold. Hence we can assume $u\lft v_2$.
Similarly, we can also assume $v_1\lft u$. We know that $x\lft v_1$ and $v_2\lft y$. Armed with the formulas $x\lft v_1$, $v_1\lft u$, $u\lft v_2$, and $v_2\lft y$, Lemma~\ref{lfrhtrRlemma} yields 
$u\in \Betw xy\subseteq  \filter \MinBetw xy$. Therefore, $\filter \MinBetw xy$ is a hco-filter. Finally, it is trivial that $x$ and $y$ belong to 
$\set{z: x\elft z\text{ and }z\elft y}$, and they are minimal elements in this set. That is, $\set{x,y}\subseteq \MinBetw xy$, and the ``$\subseteq$'' inclusion in \eqref{sdGTa} follows. This proves the first equation of \eqref{sdGTa};  the second one is a particular case since $\pair xx\in\Elig(Q)$.

Obviously, if $A$ is an antichain, then $\Min {(\filter A)}=A$. Applying this fact to $A=\MinBetw xy$ and taking \eqref{sdGTa} into account, we conclude \eqref{sdGTb} and \eqref{sdGTc}.
\end{proof}

The following lemma says that Definitions~\ref{defbetaegy} and \ref{defbetaket} are quite close to each other. 

\begin{lemma}\label{phipilemMa} Given a quasiplanar diagram $Q$, the maps
\[\eptofilt\colon \Elig (Q)\to \Hco Q,\quad\text{defined by }\pair xy\mapsto \hcofilter{\set{x,y}},\]
and 
\[\filttoep\colon \Hco Q\to \Elig (Q),\quad\text{defined by }F\mapsto \pair{\lmost F}{\rmost F},\]
are reciprocal order isomorphisms.
\end{lemma}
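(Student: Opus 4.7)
The plan is to establish well-definedness of both maps, verify that they are mutually inverse, and show that $\eptofilt$ is an order isomorphism. Well-definedness of $\eptofilt$ is immediate. For $\filttoep$, the finite antichain $\Min F$ is linearly ordered by $\slft$ thanks to Lemma~\ref{lfrhtrRlemma}(a,b), so $\lmost F$ and $\rmost F$ are well-defined elements of $\plu Q$ with $\lmost F\elft\rmost F$; hence $\pair{\lmost F}{\rmost F}\in\Elig(Q)$. The identity $\filttoep\circ\eptofilt=\id{\Elig(Q)}$ is just \eqref{sdGTb}. For the converse, $\eptofilt\circ\filttoep=\id{\Hco Q}$, fix $F\in\Hco Q$ and set $x=\lmost F$, $y=\rmost F$; the inclusion $\hcofilter{\set{x,y}}\subseteq F$ is by minimality of the left-hand side, and for the reverse inclusion I take $z\in F$ and pick $w\in\Min F$ with $w\leq z$. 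Since $x$ and $y$ are the $\slft$-extremes of $\Min F$, we have $x\elft w\elft y$, i.e., $w\in\Betw xy$; choosing $w'\in\MinBetw xy$ with $w'\leq w$ yields $z\geq w'$, and \eqref{sdGTa} then gives $z\in\filter{\MinBetw xy}=\hcofilter{\set{x,y}}$.

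For the order isomorphism, I must show $\pair{x_1}{y_1}\leq\pair{x_2}{y_2}$ iff $\hcofilter{\set{x_2,y_2}}\subseteq\hcofilter{\set{x_1,y_1}}$. For the forward direction, minimality of the right-hand side reduces everything to showing $x_2,y_2\in\hcofilter{\set{x_1,y_1}}$. When $x_1\leq x_2$, the up-set axiom yields $x_2\in\filter{x_1}\subseteq\hcofilter{\set{x_1,y_1}}$. When $x_1\lft x_2$, I produce $v\in\hcofilter{\set{x_1,y_1}}$ with $x_2\lft v$ and invoke horizontal convexity on $x_1\lft x_2\lft v$: take $v=y_2$ when $y_1\leq y_2$, and $v=y_1$ when $y_2\lft y_1$ (extracting $x_2\lft y_1$ from $x_2\elft y_2\lft y_1$ via transitivity of $\lft$). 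The treatment of $y_2$ is symmetric. Conversely, $\hcofilter{\set{x_2,y_2}}\subseteq\hcofilter{\set{x_1,y_1}}$ places $x_2$ in the right-hand side, so by \eqref{sdGTc} some $w\in\MinBetw{x_1}{y_1}$ satisfies $w\leq x_2$, giving $x_1\elft w\leq x_2$; then $x_1\leqlft x_2$ follows after ruling out $x_2\lft x_1$ via transitivity of $\lft$ applied to $x_2\lft x_1\lft w$, which would force $x_2\lft w$ and contradict $w\leq x_2$. Symmetrically $y_1\leqrght y_2$.

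The main obstacle is the case analysis in the order isomorphism step, especially the mixed cases where $\leq$ and $\lft$ interact. An auxiliary fact worth isolating first is that $a\leq b$, $b\lft c$, and $a\parallel c$ together imply $a\lft c$: this follows by contradiction via Lemma~\ref{lfrhtrRlemma}(c), since the alternative $c\lft a$ would yield either $c\lft b$ or $b\nonparallel c$, both contradicting $b\lft c$. Combined with transitivity of $\lft$ from Lemma~\ref{lfrhtrRlemma}(a), this auxiliary fact systematically converts the disparate clauses in the definitions of $\leqlft$ and $\leqrght$ into uniform $\lft$-chains on which horizontal convexity (or the up-set axiom) can be applied directly, making each subcase of the forward direction a routine verification.
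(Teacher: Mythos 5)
Your proposal is correct and follows essentially the same route as the paper's proof: the mutual-inverse part rests on Lemma~\ref{sdGT} exactly as in the paper, and the order-isomorphism part uses the same case analysis (up-set property plus horizontal convexity for the forward direction, \eqref{sdGTc} for the converse), the only cosmetic difference being that you exclude $x_2\lft x_1$ by transitivity instead of invoking Lemma~\ref{lfrhtrRlemma}\eqref{lfrhtrRlemmac} directly. Just make two degenerate points explicit: in the forward direction the subcase $x_2=y_2$ must be settled by the up-set property alone (horizontal convexity needs strict $\lft$, so ``$x_2\lft v$'' can fail there), and in the converse you should also rule out $x_2<x_1$, which is immediate from $x_1\elft w\leq x_2$ (or from the minimality of $x_1$ via \eqref{sdGTc}, as the paper argues).
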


\begin{proof} 
Assume that 
$\pair{x_1}{y_1}\leq \pair{x_2}{y_2}$ in 
 $\Elig(Q)$. This means that $x_1\leqlft x_2$
%, $x_2\elft y_2$ 
and $y_2 \geqlft y_1$. Let $F_i= \hcofilter{\set{x_i,y_i}} = \eptofilt(\pair{x_i}{y_i})$ for $i\in\set {1,2}$.  
To obtain  $F_1 \dleq F_2$, that is $F_2\subseteq F_1$, we have to show $x_2,y_2\in \hcofilter{\set{x_1,y_1}}$. We can assume $x_1\not\leq x_2$ since otherwise $x_2 \in \hcofilter{\set{x_1,y_1}}$ trivially holds. Thus $x_1\lft x_2$. If $y_2\lft y_1$, then $x_1\lft x_2\elft y_2\lft y_1$, together with the horizontal convexity of $F_1$, yields $x_2\in F_1$. If $y_2\geq y_1$, then $y_2\in F_1$, $x_1\lft x_2\elft y_2$, and the horizontal convexity of $F_1$ yield $x_2\in F_1$ again. Hence, $x_2\in F_1$, and $y_2\in F_1$ follows  by left-right duality. Therefore, $\eptofilt$ is order-preserving. 

We know from Lemma~\ref{sdGT}\eqref{sdGTb} that $\filttoep\circ\eptofilt$ is the identity $\Elig(Q)\to\Elig(Q)$ map. To prove that $\eptofilt\circ\filttoep$ is the identity $\Hco Q\to \Hco Q$ map, let $F\in \Hco Q$. Denoting $\lmost F$ and $\rmost F$ by $x$ and $y$, respectively, we have 
$\filttoep(F)=\pair xy$. We also have
$(\eptofilt\circ \filttoep)(F)= \eptofilt(\filttoep(F))= \hcofilter\set{x,y}$. The inclusion $(\eptofilt\circ \filttoep)(F)= \hcofilter\set{x,y}\subseteq F$ is trivial. To show the converse inclusion, let $u\in F$. Then there exists a $v$ in the antichain $ \Min F$ such that $u\geq v$. By the definition of $x$ and $y$, we have $x\lft v\lft y$. Hence $v\in \hcofilter\set{x,y}$, which implies $u\in \hcofilter\set{x,y}$. This proves that $\eptofilt\circ\filttoep$ is the identity $\Hco Q\to \Hco Q$ map, and thus $\eptofilt$ and $\filttoep$ are reciprocal bijections. 

Finally, to prove that $\filttoep$ is order-preserving, assume that $F_1\dleq F_2\in \Hco Q$. Denoting $\filttoep(F_i)$ by $\pair{x_i}{y_i}$, this means $\hcofilter\set{x_1,y_1}\supseteq \hcofilter\set{x_2,y_2}$. Hence, by \eqref{sdGTa}, $\set{x_2,y_2}\subseteq \filter{\MinBetw {x_1}{y_1}}$. 
If $x_2\geq x_1$, then $x_1\leqlft x_2$ is clear. Hence, we assume $x_2\not\geq x_1$.  It follows trivially or from Lemma~\ref{sdGT}\eqref{sdGTc} that $x_1$ belongs to the set $\Min( \filter{\MinBetw {x_1}{y_1}} )$, whence  $x_2\not<x_1$. Thus $x_2\parallel x_1$. By Lemma~\ref{sdGT}\eqref{sdGTc}, there exists a $u\in\MinBetw xy$ such that $x_2\geq u$, and we obtain $x_1\lft x_2$  from Lemma~\ref{lfrhtrRlemma}\eqref{lfrhtrRlemmac}. Hence, in all cases, $x_1\leqlft x_2$. By left-right duality, we obtain $y_2\geqlft y_1$. Therefore, $\filttoep(F_1)=\pair{x_1}{y_1}\leq \pair{x_2}{y_2}=\filttoep(F_2)$.
\end{proof}

The concept of antimatroids is due to \init{R.\,E.~}Jamison-Waldner~\cite{jamison}. Like in \init{G.\ }Cz\'edli \cite{czgcoord}, we again cite the following definition from  \init{D.~}Armstrong~\cite[Lemma 2.1]{armstrong}.
The set of all subsets of a set $E$ is denoted by $\powset E$.

\begin{definition}
A pair $\pair{E}{\alg F}$ is an \emph{antimatroid} if it satisfies the following properties:
\begin{enumeratei}
\item\label{antimatdefa} $E$ is a finite set, and $\emptyset\neq \alg F \subseteq \powset E$;
\item\label{antimatdefb} $\alg F$ is a \emph{feasible set}, that is, for each nonempty $A\in \alg F$, there exists an $x\in A$ such that $A\setminus\set x\in\alg F$;
\item\label{antimatdefc} $\alg F$ is closed under taking unions;
\item\label{antimatdefd} $E=\bigcup\set{A: A\in \alg F}$.  
\end{enumeratei}
\end{definition}

The relevance of this concept here
%in the present paper 
is explained by the following well-known statement; see 
\init{D.\ }Armstrong~\cite[Theorem 2.6]{armstrong}, who attributes it to 
\init{G.\ }Birkhoff, Whitney and \init{S.\ }MacLane, or \init{K.\ }Adaricheva, \init{V.\,A.\ }Gorbunov,  and \init{V.\,I.\ }Tumanov~\cite{adarichevaetal}, 
  see also \init{G.\ }Cz\'edli~\cite{czgcoord}.
    
\begin{lemma}\label{antimtrJDl} If $\pair{E}{\alg F}$ is an antimatroid, then $\tuple{\alg F;\subseteq}$ is a finite join-dist\-rib\-utive lattice. Up to isomorphism, each join-distributive lattice can be obtained this way. 
\end{lemma}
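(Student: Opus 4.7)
My plan is to prove the two halves of the lemma separately: first, that $\tuple{\alg F;\subseteq}$ is a finite join-distributive lattice whenever $\pair E{\alg F}$ is an antimatroid, and second, that every finite join-distributive lattice arises this way up to isomorphism.

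For the forward half, I would start by showing $\tuple{\alg F;\subseteq}$ is a bounded lattice. Axiom (iii) gives that joins are set-theoretic unions; axiom (iv) puts the top $E=\bigcup\alg F$ in $\alg F$; iteratively applying (ii) yields $\emptyset\in\alg F$ as the bottom; and meets then exist by finiteness, with $A\wedge B=\bigcup\set{C\in\alg F: C\subseteq A\cap B}$. For join-distributivity, the key claim is that each interval $[A,\upcover A]$ of $\tuple{\alg F;\subseteq}$ is Boolean. Setting $\Gamma(A)=\set{x\in E\setminus A: A\cup\set x\in\alg F}$, union-closure immediately gives $A\cup S\in\alg F$ for every $S\subseteq\Gamma(A)$, and a short descending-chain argument using (ii) together with union-closure shows that every upper cover of $A$ in $\alg F$ is of the form $A\cup\set x$ with $x\in\Gamma(A)$, so $\upcover A=A\cup\Gamma(A)$. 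Any $B\in\alg F$ with $A\subseteq B\subseteq\upcover A$ is then $A\cup(B\setminus A)$ with $B\setminus A\subseteq\Gamma(A)$, so the map $S\mapsto A\cup S$ is a poset isomorphism $\powset{\Gamma(A)}\to[A,\upcover A]$, making this interval Boolean and in particular distributive.

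For the converse, I would pass to the dual: since $L$ is join-distributive, $L^\partial$ is meet-distributive, and such lattices are known to be the closure lattices of convex geometries. Given a convex geometry $\pair E{\alg C}$ with closure lattice $\tuple{\alg C;\subseteq}$ isomorphic to $L^\partial$, the complement map $C\mapsto E\setminus C$ turns $\alg C$ into a family $\alg F=\set{E\setminus C: C\in\alg C}$ on $E$. A routine verification---feasibility coming from the anti-exchange property of $\alg C$, union-closure from the intersection-closure of $\alg C$, and the ground-set axiom from $\emptyset\in\alg C$---shows that $\pair E{\alg F}$ is an antimatroid and that $\tuple{\alg F;\subseteq}\cong L$.

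The main obstacle is the converse direction, because the naive candidate $f(x)=\Jir L\cap\ideal x$ generally fails union-closure in a non-distributive join-distributive lattice, so a more refined construction (like the convex-geometry route above) is unavoidable. Since the correspondence between meet-distributive lattices and convex geometries is itself a classical result, in the actual write-up I would likely cite \cite{armstrong} and \cite{adarichevaetal} rather than reproduce the construction in full, consistently with how the paper already states the lemma by reference.
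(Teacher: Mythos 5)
Your proposal is correct, but it is worth pointing out that the paper does not prove this lemma at all: it is imported verbatim from the literature (Armstrong's Theorem 2.6, with the alternative sources Adaricheva--Gorbunov--Tumanov and Cz\'edli cited), so the ``paper's route'' is a pure citation. Your forward half is a genuine, self-contained and elementary argument, and it is sound: union-closure plus the top and bottom elements give a finite lattice with $A\vee B=A\cup B$; the chain argument (take a feasibility chain $\emptyset=B_0\subset\dots\subset B_k=B$ inside a cover $B\supsetneq A$, pick the least $i$ with $B_i\not\subseteq A$, and use union-closure on $A\cup B_i$) does show every upper cover of $A$ is $A\cup\set x$ with $x\in\Gamma(A)$, whence $\upcover A=A\cup\Gamma(A)$ and $S\mapsto A\cup S$ is an isomorphism $\powset{\Gamma(A)}\to[A,\upcover A]$, so each such interval is Boolean and $\tuple{\alg F;\subseteq}$ is join-distributive in exactly the sense the paper uses. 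Your converse, by contrast, is essentially a citation in disguise: the statement ``finite meet-distributive lattices are precisely the closure lattices of convex geometries'' is the complement-dual reformulation of the very lemma being proved, so routing through it (anti-exchange giving accessibility of complements, intersection-closure giving union-closure) adds a translation layer rather than a proof; that is perfectly consistent with how the paper handles the whole lemma, but if you wanted the converse to be as self-contained as your forward half you would need the standard construction itself (e.g., realizing $L$ on the set $E=\Jir L$ via the feasible sets $\set{j\in\Jir L: j\leq x,\ j\ \text{``used'' by a maximal chain to }x}$, or equivalently building the convex geometry from the meet-irreducibles), since, as you correctly observe, the naive $x\mapsto\Jir L\cap\ideal x$ fails union-closure outside the distributive case. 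In short: what your approach buys is an explicit elementary proof of the easy direction that the paper leaves to references; what the paper's treatment buys is brevity, at the cost of proving nothing locally.
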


\begin{lemma}\label{nhVbztZTsV}
If $Q$ is a quasiplanar diagram, then $\tuple{\Hco Q;\dleq}$ is a semimodular lattice.
\end{lemma}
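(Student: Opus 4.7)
The plan is to realize $(\Hco Q, \dleq)$ as the feasible-set lattice of an antimatroid and then to apply Lemma~\ref{antimtrJDl}, which gives join-distributivity and in particular upper semimodularity. I would first check that $\Hco Q$ is closed under intersection: for $F_1, F_2 \in \Hco Q$, the set $F_1 \cap F_2$ contains $1$, is an up-set, and is horizontally convex (a triple $a \lft b \lft c$ with $\set{a,c} \subseteq F_1 \cap F_2$ forces $b$ into each of $F_1$ and $F_2$ by horizontal convexity of each). Hence $(\Hco Q, \dleq)$ is a lattice, with $F_1 \vee F_2 = F_1 \cap F_2$ and $F_1 \wedge F_2 = \hcofilter{F_1 \cup F_2}$.

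I would then set $E = \plu Q \setminus \set 1$ and $\alg F = \bigset{E \setminus F : F \in \Hco Q}$. The complementation $F \mapsto E \setminus F$ is an order isomorphism $(\Hco Q, \dleq) \to (\alg F, \subseteq)$, so it suffices to prove $\pair E {\alg F}$ is an antimatroid. Three axioms are routine: $E$ is finite and $\alg F$ is nonempty; $E = \bigcup \alg F$, because $\set 1 \in \Hco Q$ puts $E$ into $\alg F$; and union-closedness $(E \setminus F_1) \cup (E \setminus F_2) = E \setminus (F_1 \cap F_2)$ reduces to the intersection-closedness above.

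The technical core is feasibility: for every $F \in \Hco Q$ with $F \neq \plu Q$, some $x \in \plu Q \setminus F$ must satisfy $F \cup \set x \in \Hco Q$. My approach is to fix a cover $F^+$ of $F$ in $(\Hco Q, \subseteq)$, which exists by finiteness, and to argue that $F^+ \setminus F$ must be a singleton via a careful extremal choice of $x$ in $\Max_\leq(F^+ \setminus F)$. A natural first attempt is to take $x$ to be the leftmost element under $\lft$ of this antichain; then $F \cup \set x$ is an up-set by maximality. A ``left-side'' horizontal-convexity failure $a \lft b \lft x$ with $a \in F$, $b \in F^+ \setminus F \setminus \set x$ is ruled out by Lemma~\ref{lfrhtrRlemma}\eqref{lfrhtrRlemmac}: picking $b' \in \Max_\leq(F^+ \setminus F)$ with $b \leq b'$, one has $b' \neq x$ (else $b \leq x$ contradicts $b \lft x$), hence $x \lft b'$ by leftmostness, and the lemma's conclusion $x \lft b$ or $b \nonparallel x$ both contradict $b \lft x$. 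A ``right-side'' failure $x \lft b \lft c$ with $c \in F$ forces an iterative refinement: replace $x$ by the $\leq$-dominant of $b$, which lies strictly $\lft$-later in $\Max_\leq(F^+ \setminus F)$, and repeat. Finiteness combined with the dual, right-to-left extremal argument should terminate the iteration at an $x$ free of both obstructions.

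Once feasibility is established, Lemma~\ref{antimtrJDl} gives that $(\alg F, \subseteq)$ is join-distributive and in particular upper semimodular, and the order isomorphism transports semimodularity back to $(\Hco Q, \dleq)$. The hard part will be the feasibility step: neither the leftmost nor the rightmost $\leq$-maximal element of $\plu Q \setminus F$ works universally---each can violate horizontal convexity on one side---so the balanced, two-sided extremal argument driven by Lemma~\ref{lfrhtrRlemma} seems unavoidable.
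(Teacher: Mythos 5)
Your global strategy is the same as the paper's: realize $\tuple{\Hco Q;\dleq}$ as the feasible sets of an antimatroid on $E=Q\setminus\set{0,1}$ via complementation, check the easy antimatroid axioms through intersection-closedness of $\Hco Q$, reduce everything to the feasibility axiom, and invoke Lemma~\ref{antimtrJDl}. All of that, including the reduction of feasibility to the claim that a cover $F^+$ of $F$ in $\tuple{\Hco Q;\subseteq}$ satisfies $|F^+\setminus F|=1$, is sound and matches the paper. The problem is that the feasibility step, which is the entire technical content of the lemma, is left as a sketch with a genuine gap.

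Concretely, let $A=\Max(F^+\setminus F)=\set{a_1\lft a_2\lft\dots\lft a_k}$; this is an antichain, linearly ordered by $\slft$ by Lemma~\ref{lfrhtrRlemma}. The two facts you actually prove are correct: $a_1$ admits no left-side convexity failure, and a right-side failure $a_i\lft b\lft q$ (with $q\in F$ and, by convexity of $F^+$, $b\in F^+\setminus F$) forces the element of $A$ above $b$ to lie strictly to the right of $a_i$; dually, $a_k$ has no right-side failure and left-side failures point strictly leftward. But these facts are consistent, for $k\geq 2$, with \emph{every} $a_i$ having at least one failure (say $a_1$ failing rightward toward $a_2$ and $a_2$ failing leftward toward $a_1$), so ``finiteness plus the dual extremal argument'' cannot terminate your iteration: once you move off $a_1$, the leftmostness argument that excluded left-side failures is no longer available, and you never show that the element where the rightward walk stops is free of left-side failures. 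What your sketch is missing is the leverage the paper extracts from minimality of the cover \emph{inside} the convexity analysis, not merely at the end: every $v\in F^+\setminus F$ satisfies $\hcofilter{(F\cup\set v)}=F^+$, and combining this with the description $F=\hcofilter{\set{x,y}}=\filter{\MinBetw xy}$ from Lemmas~\ref{phipilemMa} and \ref{sdGT} and with the three rules \eqref{twoenough}, \eqref{sztnBnRw}, \eqref{szlsnmjtszk}, the paper runs a three-case analysis according to how an element of $F^+\setminus F$ sits relative to $\MinBetw xy$. Some argument of that kind is needed to close your proof; as written, the core step is not established.
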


\begin{proof} Let $\alg F=\set{\plu Q\setminus F: F\in \Hco Q }$ and $E=Q\setminus\set{0,1}$.  
Then $\alg F\subseteq \powset E$ and 
$\tuple{\Hco Q;\dleq}\cong \tuple{\alg F ;\subseteq}$. Since $\Hco Q$ is clearly closed with respect to intersections, $\alg F$ is closed under taking unions. We claim that $\tuple{E;\alg F }$ is an antimatroid.  This will prove Lemma~\ref{nhVbztZTsV}, because then  Lemma~\ref{antimtrJDl} applies and join-distributive lattices are semimodular; 
see, for example, \init{B.\ }Monjardet \cite{monjardet}, \init{R.\,E.\ }Jamison-Waldner  \cite{jamisonwaldner},   and see \cite{armstrong}, \cite{adarichevaetal}, and \cite{czgcoord} mentioned a few lines above.  
Since $E$ and $\emptyset$ belong to $\alg F$, we only have to show that $\alg F$ is a feasible set. By the definition of $\alg F$, is suffices to prove that 
%
%\begin{equation}
if $\plu Q\neq F\in \Hco Q$, then there exists an element $u$ in $\plu Q\setminus F$ such that $F\cup\set u\in\Hco Q$.
%\end{equation}
To show this, take a minimal $G\in  \Hco Q$, with respect to ``$\subseteq$'', such that $F\subset G$; it is sufficient to prove that $|G\setminus F|=1$.
By Lemma~\ref{phipilemMa}, $F$ is of the form $\hcofilter{\set{x,y}}$ for some $\pair xy\in\Elig(Q)$. 
%Pick an element $u\in G\setminus F$. 
There are three cases to discuss, but first we formulate the following three rules. 
{\allowdisplaybreaks{
\begin{align}
\label{twoenough}
(\forall x_1,x_2,x_3\in \plu Q)\,\, (\exists i\in\set{1,2,3})\,\,(x_i\in \hcofilter{(\set{x_1,x_2,x_3}\setminus\set{x_i})}  ;\\
\label{sztnBnRw}
(\pair {x_1}{x_2}\in \Elig(Q) \text{ and } x_3<x_1 ) \,\,\then\,\, x_3\notin\hcofilter\set{x_1,x_2};\\
\label{szlsnmjtszk}
x_1\lft x_2\lft x_3\,\,\then\,\,(x_1\notin \hcofilter\set{x_2,x_3} \text{ and }  x_3\notin \hcofilter\set{x_1,x_2} )\text.
\end{align}}}%
The validity of \eqref{twoenough} is obvious if $\set{x_1,x_2,x_3}$ is not a three-element antichain, and it follows from the fact that one of the three elements is horizontally between the other two otherwise. 
To prove \eqref{sztnBnRw} by way of contradiction, suppose that \eqref{sztnBnRw} fails. Then $x_1\lft x_2$, and Lemma~\ref{sdGT}\eqref{sdGTa} yields a $t$ such that $x_1\elft t\elft x_2$ and $t\leq x_3$. We have $x_1\neq t$ since $x_3<x_1$. Hence $x_1> x_3\geq t$ contradicts $x_1\lft t$, proving \eqref{sztnBnRw}. Next, it suffices only to prove \eqref{szlsnmjtszk} for $x_1$, because then the $x_3$-part follows by left-right symmetry. By way of contradiction, suppose $x_1\lft x_2\lft x_3$ but $x_1\in \hcofilter\set{x_2,x_3}$. By Lemma~\ref{sdGT}\eqref{sdGTa}, there exists a $t$ such that $x_2\elft t\elft x_3$ and $t\leq x_1$. Actually, $x_2\lft t\lft x_3$ since $\set{x_1,x_2,x_3}$ is an antichain. We obtain $x_2\lft x_1$ from Lemma~\ref{lfrhtrRlemma}\eqref{lfrhtrRlemmac}, which contradicts $x_1\lft x_2$. This proves \eqref{szlsnmjtszk}.

\begin{caseone} Here we assume that there exists an element 
\[u\in \ideal{\MinBetw xy}\setminus \MinBetw xy = \ideal{\MinBetw xy}\setminus F
\]
such that $u\not\leq z$ for some  $z\in \MinBetw xy$. In what follows, $u$ will stand for such an element. We claim that  $u < x$ or $u<  y$. Suppose the contrary. Then $x\parallel u$, $u\parallel y$, and  there is a $t\in \MinBetw xy$ such that $u<t$. Since $x\lft t\lft y$, Lemma~\ref{lfrhtrRlemma}\eqref{lfrhtrRlemmac} gives $u\in\Betw xy\subseteq F$, a contradiction. Hence, we can assume $u < x$. We claim $u\not\leq y$, and we prove this by way of contradiction. Suppose $u\leq y$. Since $u\parallel z$, either $u\lft z\lft y$ and Lemma~\ref 
{lfrhtrRlemma}\eqref{lfrhtrRlemmaa} yield $u\lft y$, which contradicts $u<y$,   or $x\lft z\lft u$ and we have $x\lft u$, which contradicts $u<x$. Thus $u\not\leq y$. We know $u\not\geq y$ from $u\notin F$. If we had $y\lft u$, then we would obtain $x\lft u$ by Lemma~\ref{lfrhtrRlemma}\eqref{lfrhtrRlemmaa}, which would contradict $u<x$. Therefore, $u\lft y$, and $\pair uy\in\Elig(Q)$.  Clearly, $F\subset \hcofilter{\set{u,y}}\subseteq G$, the minimality of $G$, and Lemma~\ref{sdGT}\eqref{sdGTa} give $G=\hcofilter\set{u,y}=\filter\MinBetw uy$.

We claim  $\MinBetw uy\subseteq \set u\cup\filter\MinBetw xy$. Suppose the contrary. Then there exists a $t\in \MinBetw uy$ such that $u\neq t\notin \filter\MinBetw xy=F$. We have $x>t$, because $t\notin F$ excludes $x\leqlft t$ while $t\lft x$ would lead to 
$t\lft u$ by Lemma~\ref 
{lfrhtrRlemma}\eqref{lfrhtrRlemmac}, which would contradict $u\lft t$. 
Therefore, $G\supseteq \hcofilter\set{t,y}\supset \hcofilter\set{x,y}=F$ and the minimality of $G$ implies that $G=\hcofilter \set{t,y}$. Using $u\in G$ and Lemma~\ref{sdGTa}\eqref{sdGTa}, we obtain an $s\in\Betw ty$ such that $s\geq u$. Hence, Lemma~\ref 
{lfrhtrRlemma}\eqref{lfrhtrRlemmac} yields $t\lft u$ or $t\nonparallel u$, which contradicts $u\lft t$. Consequently, $\MinBetw uy\subseteq \set u\cup\filter\MinBetw xy$.

Next, we claim that, for any $r\in \plu Q$,
\begin{equation}\label{siGlG} r>u \then r\in F\text.
\end{equation}
Suppose the contrary. That is, we have an $r\in G\setminus F$ such that $r>u$. The minimality of $G$ yields $u\in G=\hcofilter\set{r,x,y}$. Since $u\notin F=\hcofilter\set{x,y}$,  \eqref{twoenough} implies $u\in \hcofilter\set{r,x}$ or $u\in \hcofilter\set{r,y}$.
If $r\parallel x$, then \eqref{sztnBnRw} excludes $u\in \hcofilter\set{r,x}$. If $r\nonparallel x$, then $u\in \hcofilter\set{r,x}=\filter r\cup\filter x$ by Lemma~\ref{sdGT}\eqref{sdGTa}, which is excluded by $u<r$ and $u<x$. Hence, $u\in \hcofilter\set{r,y}$. Since $u\parallel y$ and $u<r$, this is excluded if $r\nonparallel y$. Thus $r\parallel y$, and $u\in \hcofilter\set{r,y}$ contradicts \eqref{sztnBnRw}. This proves \eqref{siGlG}.

Finally, combining $\MinBetw uy\subseteq \set u\cup\filter\MinBetw xy$, \eqref{siGlG}, and  $G=\filter\MinBetw uy$, we obtain $G=F\cup\set u$, which gives $|G\setminus F|=1$. 
\end{caseone}

\begin{casetwo}  Here we assume that there exists an element $u\in G\setminus F$ such that  $u\leq z$ for all $z\in {\MinBetw xy}$. (In particular, $u\in \ideal{\MinBetw xy}$.) In what follows, $u$ will stand for such an element.
The minimality of $G$ and Lemma~\ref{sdGT}\eqref{sdGTa} give $G=\hcofilter u=\filter u$. We claim 
\begin{equation}\label{allCovrs}
\text{$u\prec z$ for all $z\in\MinBetw xy$.} 
\end{equation}
To show this by way of contradiction, suppose the contrary. Then there is a $v$ such that $u<v<z$. Since $z$ is a minimal element of $\Betw xy$, Lemma~\ref{sdGT}\eqref{sdGTa} easily implies $v\notin F$. The minimality of $G$ gives 
$u\in G=\hcofilter\set{x,y,v}$. 
We apply \eqref{twoenough} to $\hcofilter\set{x,y,v}$. Since $v\notin F=\hcofilter\set{x,y} =\hcofilter\MinBetw xy  $, left-right symmetry allows us to assume 
$y\in \hcofilter\set{x,v}$. This gives $u\in G=\hcofilter\set{x,v}$. 
Now if we had $x\nonparallel v$, 
%that is  $z=x$, 
then $u\in \hcofilter\set{x,v}=\hcofilter\set{x}\cup  \hcofilter\set{v}=\filter x\cup\filter v$ would contradict $x>u$ and $v>u$. 
Otherwise $\pair xu$ or $\pair vx$ belongs to $\Elig(Q)$, and $u\in\hcofilter\set{x,v}$ contradicts \eqref{sztnBnRw} or the left-right dual of \eqref{sztnBnRw}. This proves \eqref{allCovrs}.
 
Next, we claim
\begin{equation}\label{dTGWRm}
\text{$(\forall z\in\filter u)\,\,(z>u\then z\in F)$.}
\end{equation}
Suppose the contrary, and pick a $v\in\filter u$ such that $v> u$ and $v\notin F=\hcofilter\set{x,y}$.
The minimality of $G$ yields 
$u\in G=\hcofilter\set{x,y,v}$.
By \eqref{twoenough},  $v\notin F$, and left-right symmetry, we can assume $u\in\hcofilter\set{x,v}$. Since $u\prec x$ and $u<v$ exclude $x\nonparallel v$, \eqref{sztnBnRw} yields the same contradiction as in the previous paragraph. 

Finally, \eqref{dTGWRm} and $G=\filter u$ implies $|G\setminus F|=1$.
\end{casetwo}

\begin{casethree}
Here we assume that for all $u\in G\setminus F$,   $u\notin \ideal{\MinBetw xy}$. In what follows, $u$ will stand for such an element of  $G\setminus F$.
Since $u\notin F=\filter{\MinBetw xy}$, the primary assumption of the present case yields that $\set u\cup \MinBetw xy$ is an antichain and $u\notin \Betw xy$. Hence either $u\lft x$ or $y\lft u$; we can assume the latter by left-right symmetry. Since $F=\hcofilter\set{x,y}$ is a proper subset of $\hcofilter\set{x,u}$ by  \eqref{szlsnmjtszk} and $\hcofilter\set{x,u}\subseteq G$, the minimality of $G$ implies $G=\hcofilter\set{x,u}$.  We claim that $u$ is immediately on the right of $y$, that is, 
\begin{equation}\label{ndpwzT}
\text{there is no $v$ such that $y\lft v\lft u$.}
\end{equation}
To prove this by contradiction, suppose the contrary, and take such an element $v$. Since $x\lft v\lft u$ by Lemma~\ref{lfrhtrRlemma}\eqref{lfrhtrRlemmaa}, we have $v\in G$. Also, $F=\hcofilter\set{x,y}\subseteq \hcofilter\set{x,v}$.  But $v\notin F$ and $u\notin \hcofilter\set{x,v}$   by \eqref{szlsnmjtszk}. Hence, $F\subset \hcofilter\set{x,v} \subset G$ contradicts the minimality of $G$. This proves \eqref{ndpwzT}. Next, we claim 
\begin{equation}\label{nnZRQpj}
(\forall v\in Q)\,\,( u<v \then v\in F )\text.
\end{equation}
Suppose the contrary. Then $F=\hcofilter\set{x,y}\subset \hcofilter\set{x,y,v} \subseteq G$, and the minimality of $G$ yields $G=\hcofilter\set{x,y,v}$. Since $G\neq F=\hcofilter\set{x,y}$, \eqref{twoenough}
implies $u\in G= \hcofilter\set{x,v}$ or 
$u\in \hcofilter\set{y,v}$.  If $x\nonparallel v$, then $\hcofilter\set{x,v}=\filter x\cup \filter v$, and $u\parallel x$ and $u<v$ excludes $u\in  \hcofilter\set{x,v}$. If $x\parallel v$, then $u\notin  \hcofilter\set{x,v}$ by 
\eqref{sztnBnRw}. Hence, $u\in \hcofilter\set{y,v}$. We can exclude $y\nonparallel v$ the same way as we excluded $x\nonparallel v$ above. Hence $y\parallel v$, and the left-right dual of \eqref{sztnBnRw} gives a contradiction. This proves \eqref{nnZRQpj}. 

Now we are in the position to show $G=\hcofilter \set{x,u}$ equals $F\cup\set u$. The ``$\supseteq$'' inclusion is clear. To prove the converse inclusion, assume $t\in G\setminus \set u$. By Lemma~\ref{sdGT}\eqref{sdGTa}, there exist a $v\in\MinBetw xu$ such that $v\leq t$. If $v=u$, then $t\in F$ by \eqref{nnZRQpj}. If $v\in F$, in particular, if $v=x$, then $t$ trivially belongs to $F$. Hence, for the sake of contradiction,  suppose $v\notin F$ and $x\lft v\lft u$. 
We claim that there exists a $z \in \MinBetw xy$ such that $v<z$. Suppose the contrary, that is, $v\not< z$ for all $z \in \MinBetw xy$.  Since $v\notin F$, we also have $v\not\geq z$ for all $z \in \MinBetw xy$. Hence $\set {v}\cup\MinBetw xy$ is an antichain. Since \eqref{ndpwzT} and $v\neq u$ exclude $y\lft v$, we have $v\in\Betw xy$. By finiteness, there is a $v'\in\MinBetw xy$ such that $v'\leq v$. But this contradicts $v\notin F$. Therefore, there exists a $z \in \MinBetw xy$ such that $v<z$. Thus we have $v\in G\setminus F$ and $v\in \ideal \MinBetw xy$. This is a contradiction, because we are dealing with Case 3. This proves $G=F\cup \set u$ and $|G\setminus F|=1$.\qedhere
\end{casethree}
\end{proof}

An order filter $F$ of a quasiplanar diagram $Q$ is \emph{left-closed} if for all $x\in F$ and $y\in Q$, $y\lft x$ implies $y\in F$. \emph{Right-closed} order filters $G$ are defined analogously by the property $(x\lft y \text{ and } x\in G) \then y\in G$. 
Clearly, left-closed and right-closed order filters are hco-filters. 
Definition~\ref{defbetaket}\eqref{defbetaketd}-\eqref{defbetakete} should be kept in mind.

\begin{lemma}\label{lMvecFvecBjoIn} If $Q$ is a quasiplanar diagram, then the definition of $\vec F=\vec(Q)$ and that of $\vec G=\vec G(Q)$ make sense. The members of $\vec F$ are left-closed order filters, those of $\vec G$ are right-closed ones, and 
each element of the lattice $\tuple{\Hco Q;\dleq}$ is of the form $F_i\vee G_j$.
\end{lemma}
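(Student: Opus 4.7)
The plan is to verify by induction on $n$ that $\vec F$ is well-defined and each $F_n$ is a left-closed order filter; the statement for $\vec G$ will follow by left-right symmetry. Whenever $\plu Q\setminus F_n\neq\emptyset$, the set $\Max(Q\setminus F_n)$ is a nonempty antichain, and Lemma~\ref{lfrhtrRlemma}\eqref{lfrhtrRlemmaa},\eqref{lfrhtrRlemmab} make $\lft$ a strict linear order on it, yielding a unique leftmost element $f_n$. The base case $F_0=\set{1}$ is trivially a left-closed order filter. For the inductive step, $F_{n+1}=F_n\cup\set{f_n}$ is an order filter because $f_n$ is maximal in $Q\setminus F_n$. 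To see that $F_{n+1}$ is left-closed, suppose $z\in F_{n+1}$ and $z'\lft z$; the only nontrivial case is $z=f_n$ and $z'\in Q\setminus F_n$. Pick $y'\in\Max(Q\setminus F_n)$ with $y'\geq z'$. If $y'=z'$, then $y'\lft f_n$ contradicts the leftmost choice of $f_n$. If $y'>z'$, then Lemma~\ref{lfrhtrRlemma}\eqref{lfrhtrRlemmac} applied to $z'\lft f_n$ and $y'\nonparallel z'$ gives either $y'\lft f_n$ (same contradiction) or $y'\nonparallel f_n$, which is impossible because $y'\neq f_n$ both belong to the antichain $\Max(Q\setminus F_n)$.

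\textbf{Describing $\vec F$ as all left-closed order filters.} Next, I would show that every left-closed order filter $F$ appears in $\vec F$. Either $F=\plu Q=F_{|Q|-2}$, or there is a least $n\geq 1$ with $F_n\not\subseteq F$, so $F_{n-1}\subseteq F$ and $f_{n-1}\notin F$. I would then rule out $z\in F\setminus F_{n-1}$: any such $z$ lies below some $z'\in\Max(Q\setminus F_{n-1})$; the order-filter property forces $z'\in F$, hence $z'\neq f_{n-1}$, whence $f_{n-1}\lft z'$ by leftmostness of $f_{n-1}$, and left-closedness of $F$ then forces $f_{n-1}\in F$, a contradiction. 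Thus $F=F_{n-1}\in\vec F$. Symmetrically, $\vec G$ is exactly the collection of right-closed order filters.

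\textbf{The join representation.} Given $H\in\Hco Q$, I would define $F^*(H)=\set{z\in\plu Q:z\geqlft h\text{ for some }h\in H}$ and $G^*(H)=\set{z\in\plu Q:z\geqrght h\text{ for some }h\in H}$. A short case analysis using Lemma~\ref{lfrhtrRlemma}\eqref{lfrhtrRlemmac} shows that $\geqlft$ (and similarly $\geqrght$) is transitive, which easily yields that $F^*(H)$ is a left-closed order filter containing $H$; by the preceding paragraph, $F^*(H)=F_i$ and $G^*(H)=G_j$ for some indices $i,j$. Since $\dleq$ is set-theoretic $\supseteq$ and $\Hco Q$ is closed under intersections (as noted in Lemma~\ref{nhVbztZTsV}), the join in $\tuple{\Hco Q;\dleq}$ is set-theoretic intersection, so it suffices to show $F^*(H)\cap G^*(H)=H$. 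The inclusion $\supseteq$ is immediate. For $\subseteq$, take $z\in F^*(H)\cap G^*(H)$ witnessed by $h_1,h_2\in H$ with $z\geqlft h_1$ and $z\geqrght h_2$. If $z\geq h_1$ or $z\geq h_2$, the order-filter property of $H$ gives $z\in H$. Otherwise $z\lft h_1$ and $z\rght h_2$, so $h_2\lft z\lft h_1$ with $h_1,h_2\in H$, and horizontal convexity of $H$ yields $z\in H$. I expect the main obstacle to be this last case---recognizing that the ``strictly horizontally between'' situation is exactly what the defining horizontal convexity of $H$ handles.
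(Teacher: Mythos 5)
Your proof is correct, and while your first part (the induction showing each $F_n$ is a well-defined left-closed order filter, with the dual statement for $\vec G$) is essentially the paper's argument — the paper applies Lemma~\ref{lfrhtrRlemma}\eqref{lfrhtrRlemmac} with the roles $f_n\lft u$ and $x\nonparallel u$, whereas you invoke the mirror instance, which indeed follows from parts \eqref{lfrhtrRlemmaa} and \eqref{lfrhtrRlemmab} — your treatment of the join representation takes a genuinely different route. The paper writes $B\in\Hco Q$ as $\hcofilter{\set{x,y}}$ with $x=\lmost B$, $y=\rmost B$ via Lemmas~\ref{sdGT} and \ref{phipilemMa}, chooses the minimal indices $i,j$ with $y\in F_i$ and $x\in G_j$, and refutes a hypothetical $z\in(F_i\cap G_j)\setminus B$ by a case analysis (antichain versus comparable cases) against the minimality of $i$ and $j$. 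You instead form the closures $F^\ast(H)=\set{z:z\geqlft h\text{ for some }h\in H}$ and its right analogue, prove transitivity of $\geqlft$, and — this is an addition not in the paper — show that $\vec F$ exhausts \emph{all} left-closed order filters, so that $F^\ast(H)=F_i$ and $G^\ast(H)=G_j$ automatically; the equality $F^\ast(H)\cap G^\ast(H)=H$ then drops out of horizontal convexity of $H$ (using, as the paper also does implicitly, that $z\rght h$ is the same as $h\lft z$). Your approach is more self-contained in that it bypasses Lemmas~\ref{sdGT} and \ref{phipilemMa} and yields the pleasant byproduct that the members of $\vec F$ are exactly the left-closed order filters; the paper's approach buys explicit indices tied to the canonical pair $\pair{\lmost B}{\rmost B}$, which is the form it reuses later (for instance in the proof of Lemma~\ref{uAlftzm}).
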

 
\begin{proof} We prove by induction on $i$ that $F_i$ makes sense and it is a left-closed order filter. This is obvious for $F_0=\set 1$. Assume that $F_n$ is well-defined, it is a left-closed order filter, $|F_n|=n+1$, and $n+2\leq |Q|-2$. Then $\plu Q\setminus F_n\neq\emptyset$. Hence  $\Max(Q\setminus F_n)$ is a antichain, which has a unique leftmost element $f_n$.  We let $F_{n+1}=F\cup\set{f_n}$. It is an order filter, because $f_n$ is a maximal element outside $F_n$. Striving for a contradiction, suppose that $F_{n+1}$ is not left-closed. Then there is an $x\in \plu Q \setminus F_n$ such that $x\lft f_n$. By finiteness, there exists a $u\in \filter x \cap  \Max(Q\setminus F_n)$. Since $x\parallel f_n$, we have $f_n\neq u$, which gives $f_n\lft u$ by the definition of $f_n$. It follows from Lemma~\ref{lfrhtrRlemma}\eqref{lfrhtrRlemmac} that $f_n\lft x$, which contradicts $x\lft f_n$. Consequently, $F_{n+1}$ is a left-closed order filter. This proves that $\vec F$ consists of well-defined left-closed order filters, and left-right duality yields that $\vec G$ consists of right-closed ones. 

Next, let $B\in \Hco Q$. By Lemma~\ref{phipilemMa}, $B=\hcofilter\set{x,y}$ for a unique $\pair xy\in\Elig(Q)$. 
Let $i$ be the least subscript such that 
$y\in F_i$. Similarly, let $j$ be the smallest subscript such that $x\in G_j$. 
We claim $B=F_i\cap G_j$; in the lattice 
$\tuple{\Hco Q;\dleq}$ this means $B=F_i\vee G_j$. Since $F_i$ is left-closed, $x\in F_i$. Similarly, $y\in G_j$ since $G_j$ is right-closed. Hence $\set{x,y}\subseteq F_i\cap G_j$, and we conclude $B= \hcofilter\set{x,y}\subseteq F_i\cap G_j$. In quest of a contradiction, suppose we have an element $z\in(F_i\cap G_j)\setminus B$. First, assume that $\set{x,y,z}$ is an antichain. (This antichain consists of two or three elements, depending on whether $x=y$ or $x\lft y$.) Since $z\in\Betw xy$ would imply $z\in B$, we have $z\lft x$ or $y\lft z$. If $y\lft z$, then $z\in F_i$ implies $z\in F_i\setminus \set y=F_{i-1}$. However, then $y\in F_{i-1}$ since $F_{i-1}$ is left-closed, and this contradicts the definition of $i$. The case $z\lft x$ contradicts the definition of $j$ similarly. Therefore, $\set{x,y,z}$ is not an  antichain.
Since $x\leq z$ and $y\leq z$ are excluded by $z\notin B$, we can assume $z< y$ by left-right symmetry. 
Then $z\in F_i\setminus \set y=F_{i-1}$. Since $F_{i-1}$ is an order-filter, we obtain $y\in F_{i-1}$, which contradicts the definition of $i$. 
\end{proof}

\subsection{Statements on planar, slim, semimodular lattice diagrams}
Let $D$ be a planar lattice diagram.
If $a\leq b\in D$, then the interval $[a,b]$ determines a subdiagram, which is denoted by $[a,b]_D$ or, if there is no danger of confusion, by $[a,b]$. An element of $D$ is a  \emph{narrows} of $D$ if it is comparable with every element of $D$. The set of narrows is denoted by $\Nar D$. Reflecting $D$ to a vertical axis, we obtain its \emph{vertical mirror image} $\refl D$. We need the following statement, which is somewhat stronger than Lemma~\ref{lemmaleftbounddeterm}. 
\begin{lemma}[{\init{G.\ }Cz\'edli and \init{E.\,T.\ }Schmidt  and \cite[Lemma 4.7]{czgschslim2} or, more explicitly, 
\init{G.\ }Cz\'edli and \init{G.\ }Gr\"atzer \cite{czgggltsta}}]
\label{lemMaVflip} Let $D$ and $E$ be finite, slim, semimodular lattice diagrams, and let $\Nar D=\set{0=d_0<d_1<\dots<d_m=1}$ and $\Nar E=\set{0=e_0<e_1<\dots<e_n=1}$. Then $D$ and $E$ determine isomorphic lattices if and only if $m=n$ and, up to similarity, $[d_{i-1},d_i]_D\in\bigset{[e_{i-1},e_i]_E ,\refl{[e_{i-1},e_i]_E } }$ for $i=1,\dots,n$.
\end{lemma}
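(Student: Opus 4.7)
The plan is to address the two directions of the equivalence separately, relying throughout on the fact that ``narrows'' is purely order-theoretic: $x\in\Nar D$ iff $x$ is comparable with every element of the underlying lattice of $D$. Consequently, any lattice isomorphism $\phi\colon L_D\to L_E$ carries $\Nar D$ bijectively onto $\Nar E$. Since both narrows sets are chains containing $0$ and $1$, this forces $m=n$ and $\phi(d_i)=e_i$. Restricting $\phi$ yields lattice isomorphisms $[d_{i-1},d_i]_D\cong [e_{i-1},e_i]_E$ for each $i$, and these restrictions are slim and semimodular because slimness and semimodularity are lattice-theoretic and pass to intervals.

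The ``only if'' direction then reduces to the following uniqueness principle, which I regard as the substantive step: \emph{a finite, slim, semimodular lattice whose only narrows are $0$ and $1$ admits, up to similarity, exactly two planar diagrams, and each is the vertical mirror image of the other.} This applies to every slab $[d_{i-1},d_i]_D$ since an interior narrows of a slab would itself be a narrows of $D$ lying strictly between $d_{i-1}$ and $d_i$, contradicting the definition of $\Nar D$. Feeding the isomorphism $[d_{i-1},d_i]_D\cong [e_{i-1},e_i]_E$ into this uniqueness principle yields the required similarity, either directly or after a vertical flip.

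For the ``if'' direction, I would argue by Hall--Dilworth glueing along the narrows. The lattice $L_D$ is recovered from the slabs $[d_{i-1},d_i]_D$ by identifying, for consecutive $i$, the top $d_i$ of one slab with the bottom $d_i$ of the next; the same holds for $L_E$. Similarity of diagrams entails lattice isomorphism, and a vertical flip leaves the underlying lattice unchanged and preserves the pair of endpoints $(d_{i-1},d_i)$. Hence each hypothesized similarity (or reflected similarity) gives an endpoint-preserving lattice isomorphism of the corresponding slabs, and these glue along the narrows to the desired global isomorphism $L_D\to L_E$.

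The main obstacle is the two-diagrams uniqueness statement underlying Step 2. One natural route is through the bijection with permutations of \cite{czgschperm}: a slim semimodular lattice of length $n$ corresponds to some $\pi\in S_n$, its two planar diagrams to $\pi$ and the companion realizing the vertical flip, and the triviality of interior narrows is precisely what forbids any further planar rearrangement. An alternative is an induction on $|D|$ that removes a doubly-irreducible ``corner'' from the left boundary and applies Lemma~\ref{lemmaleftbounddeterm}, which says that a slim semimodular lattice diagram is determined up to similarity by its left boundary chain; the absence of interior narrows is then used to show that, given the lattice, the left boundary chain is unique up to the vertical reflection. Once this single claim is secured, the remainder of the proof is the routine glueing described above.
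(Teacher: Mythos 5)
The paper does not actually prove this lemma: it is imported from Cz\'edli--Schmidt \cite[Lemma 4.7]{czgschslim2} and Cz\'edli--Gr\"atzer \cite{czgggltsta}, so there is no internal proof to compare yours against. Judged on its own, your outer scaffolding is correct: being a narrows is order-theoretic, so a lattice isomorphism matches the narrows chains, forces $m=n$ and $\phi(d_i)=e_i$, and restricts to the slabs; a narrows of a slab is indeed a narrows of the whole lattice (every element is comparable with $d_{i-1}$ and $d_i$, hence lies below $d_{i-1}$, above $d_i$, or inside the slab), so the slabs have trivial narrows; and the ``if'' direction by pasting endpoint-preserving isomorphisms of the slabs along the narrows is routine.

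The problem is exactly where you yourself locate it: the ``uniqueness principle'' that a slim semimodular lattice whose only narrows are $0$ and $1$ admits, up to similarity, only the diagrams $D$ and $\refl D$ \emph{is} the whole content of the lemma (the general case is a formal consequence, as your reduction shows), and you do not prove it. Appealing to the permutation bijection of \cite{czgschperm} is an appeal to the same circle of external results the paper is citing, and the phrase ``the triviality of interior narrows is precisely what forbids any further planar rearrangement'' is a restatement of the claim, not an argument. In your inductive alternative, the step ``the absence of interior narrows is used to show that the left boundary chain is unique up to vertical reflection'' is equivalent to the uniqueness principle itself; Lemma~\ref{lemmaleftbounddeterm} only converts knowledge of the left boundary chain into a similarity map, it gives no control over which maximal chains can occur as left boundaries. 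A genuine proof would have to show, for instance, that in an indecomposable slim semimodular lattice there is essentially only one unordered pair of maximal chains $\set{C_1,C_2}$ with $\Jir L\subseteq C_1\cup C_2$, and then invoke Lemma~\ref{mxlnckhtlnsK}; that is precisely the nontrivial work that is missing. A minor inaccuracy besides: ``exactly two planar diagrams'' should be ``at most two'', since $D$ and $\refl D$ may be similar (e.g., for the four-element Boolean lattice).
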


Next, we recall some well-known facts; see, for example, 
\init{D.\ }Kelly and \init{I.\ }Rival \cite[Proposition 5.2]{kellyrival} and  \init{G.\ }Cz\'edli and \init{G.\ }Gr\"atzer \cite[Exercises 1.5 and 1.5]{czgggltsta}.
The \emph{order dimension} of a poset $P=\tuple{P;\leq}$ is the least $n$ such that the ordering relation ``$\leq$'' is the intersection of $n$ linear (that is, chain) orderings. Equivalently, it is the least $n$ such that $P$ can be order-embedded into the direct product of $n$ chains. A finite lattice has a planar diagram if{f} it is of order-dimension at most 2. Now we are ready to state and prove the following lemma.

\begin{lemma}\label{lemmaalfadefok} If $D$ is a finite, planar, slim, semimodular lattice diagram, then $\alpha(D)$ defined in Definition~\ref{alfaDdef} exists $(\kern-1.5pt$and it is a quasiplanar diagram$)$. 
\end{lemma}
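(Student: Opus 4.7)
My plan is to inherit the layout of $D$. Specifically, I would take the $D$-coordinates of each element of $\Mir D\cup\set 1$, adjoin a new point $\nnul$ at some coordinate strictly below all of them, and draw each covering pair of $\tuple{Q;\leq}$ as a straight line segment. Since a $Q$-covering $c_{i-1}\prec c_i$ implies $c_{i-1}<c_i$ in $D$ and hence a strict difference in $y$-coordinates, all new segments are automatically non-horizontal; a generic infinitesimal perturbation of the $x$-coordinates ensures that no new segment passes through a vertex other than its endpoints. The result is a bounded Hasse diagram of $\tuple{Q;\leq}$ satisfying parts (i) and (ii) of Definition~\ref{alfaDdef}.

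The essential step is to check part~(iii) together with quasiplanarity, namely that for incomparable $x,y\in Q\setminus\set\nnul$ the assertion ``$x$ is on the left of every maximal chain of the constructed diagram of $Q$ through $y$'' is equivalent to $x\lft y$ in $D$. Fix such $x,y$ and a maximal chain $C_Q=\set{\nnul = c_0<c_1<\dots<c_k=1}$ of $Q$ through $y$. Reading $c_0$ as $0\in D$ and filling each $[c_{i-1},c_i]_D$ with an arbitrary maximal chain, I would produce a maximal chain $C_D$ of $D$ sharing the vertices $c_1,\dots,c_k$ with $C_Q$ and in particular passing through $y$. Since $C_Q$ and $C_D$ are geometrically identical except inside the intervals $[c_{i-1},c_i]_D$, where $C_Q$ uses the chord $c_{i-1}c_i$ and $C_D$ uses a zigzag through intermediate $D$-elements, any point of $Q\setminus C_Q$ has the same left/right status relative to $C_Q$ as relative to $C_D$ provided it does not lie in the bounded region $R_i$ enclosed between chord and zigzag. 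By Lemma~\ref{leftrightlemma} the left/right status with respect to $C_D$ depends only on $\pair xy$; hence once one shows that no element of $Q\setminus C_Q$ lies in any $R_i$, both the chain-independence of $\lft$ in the $Q$-diagram and its coincidence with $\lft$ in $D$ follow at once.

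The main obstacle is precisely that geometric exclusion. The maximality of $C_Q$ in $Q$ guarantees that no element of $\Mir D$ is strictly between $c_{i-1}$ and $c_i$ in the $D$-order, so it suffices to prove that any element of $D$ geometrically in $R_i$ must in fact belong to the $D$-interval $[c_{i-1},c_i]_D$. I expect to obtain this by applying Lemma~\ref{krchainlemma} to suitably chosen maximal chains of $D$ passing through a hypothetical intruder $w\in R_i$: the topology of $R_i$ inside the planar diagram of $D$ forces such chains to share a vertex with the subchain $C_D\cap[c_{i-1},c_i]_D$, producing a comparability of $w$ with an intermediate element of the subchain and pinning $w$ into $[c_{i-1},c_i]_D$, contradicting its assumed membership in $\Mir D$. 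Once this geometric exclusion is in place, the constructed diagram realizes $\alpha(D)$ and is quasiplanar.
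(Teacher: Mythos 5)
Your construction---keeping the $D$-coordinates of the points of $\Mir D\cup\set 1$ and joining the new covering pairs by straight chords---does not work for an arbitrary planar drawing of $D$, and the paper itself exhibits the counterexample: the diagram $Q_5$ of Figure~\ref{fig3} is obtained from $D_5$ in exactly this way (omitting vertices and connecting the remaining ones without changing their positions), yet it is not quasiplanar, because $c\parallel a$ while $c$ is on the left of the chain $\set{0,a,f,1}$ and on the right of the chain $\set{0,a,d,1}$. Here $c$ sits geometrically in the wedge between the new chords issuing from $a$, that is, precisely in one of your regions $R_i$, although $c$ does not belong to the corresponding interval $[c_{i-1},c_i]_D$ (it is incomparable to $a$). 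So the ``geometric exclusion'' you defer to the last step---that every element of $D$ lying in $R_i$ must belong to $[c_{i-1},c_i]_D$---is simply false, and no infinitesimal perturbation can repair it, since the intruder occupies a region of positive area. Nor can Lemma~\ref{krchainlemma} force the desired comparability: a maximal chain of $D$ through the intruder may leave $R_i$ by crossing the chord $c_{i-1}c_i$ transversally, because that chord is an edge of $Q$ but not of $D$, so no shared vertex with the zigzag is guaranteed.

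The missing idea is that one must first replace $D$ by a suitable \emph{similar} diagram before deleting vertices. The paper embeds a diagram of the same lattice into a grid, where incomparable elements are separated horizontally in a strong sense (if $a\lft b$ then $x(a)<x(b)$, and no two incomparable elements of the subdiagram are vertically aligned); with such coordinates the side on which an incomparable element lies is read off from the $x$-coordinates alone, independently of the chosen maximal chain, and quasiplanarity of the restricted diagram follows. The grid embedding may produce $\refl D$ instead of $D$, and the general case is assembled interval-by-interval along $\Nar D$; both points are handled via Lemma~\ref{lemMaVflip}. Since diagrams are considered only up to similarity this preliminary re-drawing is legitimate, but it is exactly the step your proposal omits, and without it the statement you are trying to prove about the regions $R_i$ is false.
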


\begin{figure}
\centerline
{\includegraphics[scale=1.0]{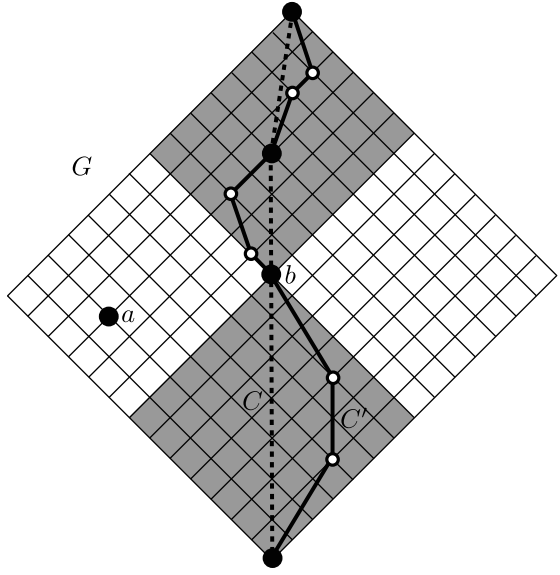}}
\caption{Illustrating the proof of Lemma~\ref{lemmaalfadefok}\label{fig1}}
\end{figure}

\begin{proof} First, we assume that  $\Nar D=\set{0,1}$. By a \emph{grid} we mean a planar diagram of a direct product of two chains such that every edge is of slope $45^\circ$ or $135^\circ$. Let $L$ denote the lattice determined by $D$.
Since $L$ is planar, is has a planar diagram $E$ embedded into a grid $G$, see Figure~\ref{fig1}. The points of $G$ are the intersections of the thin lines, and $E$ consists of the (empty and black-filled) circles and the thick solid lines. The elements of $\Jir L=\Jir E$ are denoted by black-filled circles, and these black-filled circles together with the thick dotted lines
form a diagram of $\Jir L$, which we denote by $P$. Only a part of $E$ and a part of $P$ are depicted. Note that 
\begin{equation}\label{notegymfleTt}
\text{if $u,v\in P$ and $u\parallel v$, then the line through $u$ and $v$ is not vertical.}
\end{equation}

Assume that $a,b\in P$ are incomparable elements, and $a$ is on the left of $b$ in $P$. Pick a 
maximal chain $C$ through $b$ in $P$; it consists of the thick dotted lines. Since $a\parallel b$ also in $E$ and thus in $G$, none of the two  gray-filled closed rectangles can contain $a$. Extend $C$ to a maximal chain $C'$ of $E$. Since the elements of $C'$ are comparable with the elements of $C$  in $G$, we obtain that $C'$ goes in the union of the grey-filled rectangles. Hence $a$ is on the left of $b$ in $E$. Thus we have shown that if $a\lft b$ in $P$, then $a\lft b$ in $E$. This implies that, for $a,b\in P$
\begin{equation}\label{dzWbT}
\text{if $a\lft b$ in $P$, then $a\lft b$ in $G$ and $x(a)<x(b)$,}
\end{equation}
where $x(a)$ and $x(b)$ denotes the first coordinates of $a$ and $b$, respectively. 
Similarly, if $a$ is on the right of $b$ in $P$, then so is in $E$. 

Not all edges (the thick dotted lines) of $P$ are depicted in the figure. If some edge $e$ of $P$ goes through a vertex $v$ of $P$ such that $v$ is not an endpoint of $e$, then we can move $v$ by a very little distance without changing the $\slft$ relation or destroying the validity \eqref{notegymfleTt}. Finally, \eqref{notegymfleTt} allows us to add a zero $\nnul$ and a unit to $P$, and this way we obtain a diagram $Q$.  (Here  \eqref{notegymfleTt} and \eqref{dzWbT} ensure that, if we go high enough, we can find an appropriate position for a new unit, a dually for the new zero.)
Since the grey rectangles above did not depend on the choice of $C$, we conclude that $Q$ is quasiplanar. If $E=D$, up to similarity, 
then we can let $\alpha(D)=Q$. 
Otherwise, by Lemma~\ref{lemMaVflip}, $E=\refl D$ and we can take $\alpha(D)=\refl Q$. This proves the statement for the case $\Nar D=\set{0,1}$.

Second, assume $\Nar D=\set{0=d_0<d_1<\dots<d_m=1}$. The method above gives appropriate $P_i$ for each  $[d_{i-1},d_i]_D$. From these  $P_i$, $i\in\set{1,\dots,m}$, we can easily construct $\alpha(D)$ by putting $P_i$ above $P_{i-1}$ for $i\in\set{1,\dots,m}$, adding a new zero and unit, and adding some edges between $P_{i-1}$ and $P_i$, the new zero and $P_1$, and $P_m$ and the new unit. As before, if a new edge goes through a vertex, we can slightly remove the vertex.
\end{proof}

Now we import two  statements from \init{G.\ }Cz\'edli \cite{czgcircles}.
We say that $y$ is \emph{horizontally between} $x_0$ and $x_1$ if 
$x_0\lft y\lft x_1$ or $x_1\lft y\lft x_0$. Note that $\set{x_0,x_1,y}$ is a 3-element antichain in this case. 

\begin{lemma}[{\init{G.\ }Cz\'edli \cite{czgcircles}}]\label{lMakoztes} 
Let $D$ be a finite, planar lattice diagram, and let $\set{x_0,x_1,y}$ be a $3$-element antichain in  $D$. 
Then the following two statements hold. 
\begin{enumeratei}
\item\label{lMakoztesa} If 
$y$ is horizontally between $x_0$ and $x_1$, then $x_0\wedge x_1  \leq y$.
\item\label{lMakoztesb} If, in addition, $D$ is slim and $x_0\wedge x_1\leq y$, then  $y$ is horizontally between $x_0$ and $x_1$.
\end{enumeratei} 
\end{lemma}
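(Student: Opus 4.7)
Without loss of generality assume $x_0 \lft y \lft x_1$. Choose a maximal chain $C$ of $D$ through $y$. By the definition of $\lft$ in a quasiplanar (in particular planar) diagram, $x_0$ is strictly on the left of $C$ and $x_1$ is strictly on the right of $C$. Set $m = x_0 \wedge x_1$ and consider the position of $m$ relative to $C$. Either $m$ lies on one side of $C$ (in which case exactly one of $m \leq x_0$, $m \leq x_1$ ``crosses'' $C$), or $m \in C$. In the crossing case, apply Lemma~\ref{krchainlemma} to produce $z \in C$ with $m \leq z \leq x_i$ (for the appropriate $i$); since $y, z \in C$ are comparable, and $z \geq y$ would force $x_i \geq y$ contradicting the antichain property, we conclude $z < y$ and hence $m \leq y$. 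In the case $m \in C$, use that $m$ and $y$ lie on the chain $C$ and that $m > y$ would again force $x_i \geq m > y$, so $m \leq y$.

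\textbf{Plan for part (ii).} Argue by contradiction: assume $D$ is slim, $m := x_0 \wedge x_1 \leq y$, $\{x_0,x_1,y\}$ is a $3$-element antichain, but $y$ is \emph{not} horizontally between $x_0$ and $x_1$. By left-right symmetry and by Lemma~\ref{lfrhtrRlemma}\eqref{lfrhtrRlemmab}, we may assume $x_0 \lft x_1$ and $y \lft x_0$; then Lemma~\ref{lfrhtrRlemma}\eqref{lfrhtrRlemmaa} gives $y \lft x_1$ as well. Since $m \leq y$, pick a maximal chain $M$ of $D$ that passes through $m$ and $y$, extended maximally from $0$ to $1$. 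Because $y \in M$ and $y \lft x_i$ for $i \in \{0,1\}$, quasiplanarity forces both $x_0$ and $x_1$ to lie strictly on the right of $M$.

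\textbf{Exploiting slimness.} The crux is to derive a contradiction from: (a) $m \in M$, (b) $m \leq x_0$ and $m \leq x_1$ with $x_0,x_1$ strictly right of $M$, and (c) $m \leq y \in M$. Walk up from $m$ along a maximal chain $C_0$ inside $[m,x_0]$, and similarly along $C_1 \subseteq [m,x_1]$. Since $m \in M$ and $x_0,x_1$ are on the right of $M$, each $C_i$ must leave $M$ to the right at some cover relation $m_i \prec m_i'$ with $m_i \in M$. Choose the \emph{first} such departure; then $m_i \leq y$ or $m_i \geq y$ on the chain $M$. If some $m_i \geq y$, then $x_i \geq m_i \geq y$ contradicts the antichain hypothesis, so both $m_i \leq y$. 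At this point one invokes slimness: by \init{G.\ }Cz\'edli and \init{E.\,T.\ }Schmidt's theorem that every finite slim (semimodular) lattice admits a diagram in which all edges have slope $45^\circ$ or $135^\circ$, we may assume $D$ is such a ``$C_1$-diagram''. Then the right-going edge $m_i \prec m_i'$ has slope $45^\circ$, so $m_i'$ has strictly larger $x$-coordinate than $m_i$, while $y$, lying on $M$ strictly above $m_i$, has $x$-coordinate no greater than $m_i$'s (because $M$ above $m_i$ runs to the left of any right-departing edge). Combining this with Lemma~\ref{leftrightlemma} and Lemma~\ref{lfrhtrRlemma}, one sees that $y \lft m_i'$; iterating up $C_i$ to $x_i$ propagates $y \lft x_i$ in the ``wrong'' direction and ultimately contradicts the configuration forced by $m \leq y$.

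\textbf{Main obstacle.} Part (i) is routine manipulation of the Kelly--Rival chain lemma. The genuine difficulty is part (ii): extracting the quantitative geometric content from the qualitative hypothesis ``slim''. I expect to rely on the $\pm 45^\circ$-slope normal form of slim (semimodular) planar diagrams to make the horizontal-coordinate comparisons rigorous; without this normalization, the informal statement ``the meet sits horizontally between'' has no diagrammatic meaning beyond what is already encoded in Definition~2.7. An alternative route, perhaps cleaner, is to induct on $|D|$ after removing a suitable corner (a doubly irreducible boundary element), reducing to a smaller slim diagram in which the antichain $\{x_0,x_1,y\}$ and the inequality $m \leq y$ persist; the induction step would need to verify that removing such a corner preserves both the antichain and the horizontal betweenness relation.
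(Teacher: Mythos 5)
This lemma is imported from \cite{czgcircles}; the present paper contains no proof of it, so your attempt can only be judged on its own merits. Part (i) is correct and complete: the trichotomy on the position of $m=x_0\wedge x_1$ relative to a maximal chain $C$ through $y$, followed by Lemma~\ref{krchainlemma} and comparability within $C$, closes every branch.

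Part (ii) has a genuine gap, in two places. First, the appeal to a $\pm 45^\circ$-slope normal form is not available: the conclusion ``$y$ is horizontally between $x_0$ and $x_1$'' is a property of the given diagram $D$, invariant only under similarity, so you would need every slim planar lattice diagram to be \emph{similar} to one with all edges of slope $\pm 45^\circ$; that normal form is known for slim \emph{semimodular} diagrams, whereas the lemma assumes only slimness, and nothing of the sort is proved or cited here. Second, and more seriously, the argument never actually reaches a contradiction. After producing the chain $M$ through $m\leq y$ with $x_0,x_1$ strictly on its right and the departure points $m_i\prec m_i'$ with $m_i<y$, the conclusion you extract is $y\lft m_i'$ and hence $y\lft x_i$ --- but $y\lft x_i$ \emph{is} the standing hypothesis of your reductio, so nothing is contradicted, and the closing phrase ``contradicts the configuration forced by $m\leq y$'' is not backed by any step. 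Indeed, the facts you assemble never use $x_0\lft x_1$ or $x_0\wedge x_1=m$ in an essential way, and without them no contradiction can follow. A route that does work: from $y\lft x_0\lft x_1$, part \eqref{lMakoztesa} applied to the same antichain with $x_0$ in the middle gives $y\wedge x_1\leq x_0$, whence $y\wedge x_1=x_0\wedge x_1=m$; then every join-irreducible below $x_1$ but not below $m$ is incomparable to every join-irreducible below $y$ or below $x_0$ but not below $m$, so slimness ($\Jir D$ a union of two chains) forces the latter elements to form a single chain, and taking its top element yields $x_0\leq y$ or $y\leq x_0$, contradicting the antichain hypothesis. This is where slimness genuinely enters; the slope normalization is neither necessary nor sufficient for the step you wanted.
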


\begin{lemma}[{\init{G.\ }Cz\'edli \cite{czgcircles}}]\label{lMaszdobzF}
If $L$ be a finite  semimodular lattice, $a\in \Mir L$, $b,c\in L$, $a<c$, and $ b\wedge c \leq a$, then $b\leq a$.
\end{lemma}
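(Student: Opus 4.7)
The plan is to induct on the length $\length{[a,c]}$ of the interval $[a,c]$. The inductive step is almost free: if $c$ does not cover $a$, pick any $c'$ with $a<c'<c$; then $b\wedge c'\leq b\wedge c\leq a$ and $a<c'$, so the inductive hypothesis (applicable since $\length{[a,c']}<\length{[a,c]}$) already yields $b\leq a$. Hence the only genuine work lies in the base case $a\prec c$, where the hypothesis $a\in\Mir L$ forces $c=\upcover a$ to be the unique cover of $a$.

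To handle the base case, I would assume toward a contradiction that $b\not\leq a$ and choose a maximal chain $b\wedge c=x_0\prec x_1\prec\cdots\prec x_n=b$ in $[b\wedge c,b]$. The hypothesis $b\wedge c\leq a$ gives $x_0\leq a$, while $b\not\leq a$ gives $x_n\not\leq a$; in particular $n\geq 1$, and there is an index $i$ with $x_i\leq a$ but $x_{i+1}\not\leq a$. Now I would apply the standard ``cover-or-equality'' form of semimodularity, namely ``$u\prec v\Rightarrow u\vee w\preceq v\vee w$'', to $x_i\prec x_{i+1}$ with $w=a$. Because $x_i\vee a=a$ and $x_{i+1}\vee a>a$, the equality alternative is excluded, so $a\prec x_{i+1}\vee a$, and the uniqueness of the cover of $a$ forces $x_{i+1}\vee a=\upcover a=c$.

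This yields $x_{i+1}\leq x_{i+1}\vee a=c$, and combined with $x_{i+1}\leq b$ it gives $x_{i+1}\leq b\wedge c=x_0$, contradicting $x_0<x_{i+1}$. I do not foresee any real obstacle; the only step worth checking is the equivalence between the paper's standard semimodular condition ``$u\wedge v\prec v\Rightarrow u\prec u\vee v$'' and the ``cover-or-equality'' version used above, but this is textbook material (e.g.\ in Stern's book, which is among the references).
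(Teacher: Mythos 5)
Your argument is correct. The paper itself gives no proof of this lemma --- it is imported verbatim from the reference \cite{czgcircles} --- so there is nothing internal to compare against; what you have produced is a correct self-contained proof. The reduction to the case $a\prec c$ is sound (one could even skip the induction and just pick $c'$ with $a\prec c'\leq c$ directly), and the base case is the standard argument: since $a$ is meet-irreducible it has a unique upper cover, which must be $c$; climbing a maximal chain from $b\wedge c$ to $b$ and locating the first step $x_i\prec x_{i+1}$ that leaves $\ideal a$, the cover-or-equality form of semimodularity gives $a\prec x_{i+1}\vee a$, whence $x_{i+1}\vee a=c$ and $x_{i+1}\leq b\wedge c=x_0$, a contradiction. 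The one equivalence you flag --- that upper semimodularity ($u\wedge v\prec v\Rightarrow u\prec u\vee v$) implies $u\prec v\Rightarrow u\vee w\preceq v\vee w$ in a finite lattice --- is indeed standard and immediate: if $v\not\leq u\vee w$ then $v\wedge(u\vee w)=u\prec v$ forces $u\vee w\prec v\vee w$.
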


The following lemma is a particular case of \init{G.\ }Cz\'edli and \init{E.\,T.\ }Schmidt \cite[Lemma 2.2]{czgschtJH}. The leftmost and the rightmost maximal chain of a planar lattice diagram $D$ are the \emph{left boundary chain}, denoted by $\lbound D$, and the \emph{right boundary chain}, denoted by $\rbound D$, respectively.

\begin{lemma}[\cite{czgschtJH}]\label{mxlnckhtlnsK} Let $C_1$ and $C_2$ be maximal chains in a finite, slim, semimodular lattice $L$ such that $\Jir L\subseteq C_1\cup C_2$. Then $L$ has a planar diagram $D$ such that $C_1=\lbound D$ and $C_2=\rbound D$. Furthermore, this diagram is unique $($up to similarity$)$.
\end{lemma}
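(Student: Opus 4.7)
The plan splits into uniqueness and existence, which are of very different character.

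Uniqueness is immediate from Lemma~\ref{lemmaleftbounddeterm}: once the left boundary chain is prescribed to be $C_1$, any planar diagram of $L$ is determined up to similarity by the identity map on $L$ (viewed as an order-isomorphism preserving the left boundary). So the whole issue is existence.

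For existence, the first step is a reduction to the case $\Nar L=\set{0,1}$. Every narrow of $L$ lies on every maximal chain, so the narrows $d_0<d_1<\dots<d_m$ of $L$ all lie in $C_1\cap C_2$ and split both chains into pieces $C_1\cap[d_{i-1},d_i]$ and $C_2\cap[d_{i-1},d_i]$, each a maximal chain of $[d_{i-1},d_i]$. Each interval $[d_{i-1},d_i]$ is slim and semimodular, has no internal narrows, and still satisfies $\Jir{[d_{i-1},d_i]}\subseteq C_1\cup C_2$ since a join-irreducible of $L$ lying in this interval is a join-irreducible of the interval. If planar diagrams $D_i$ for each $[d_{i-1},d_i]$ with the prescribed boundaries are produced, then gluing them vertically along the narrows yields the required diagram $D$ for $L$, with $C_1=\lbound D$ and $C_2=\rbound D$.

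In the reduced case $\Nar L=\set{0,1}$, the plan is to construct a grid embedding into the plane. For $x\in L$ put $c_1(x)=\max\set{c\in C_1: c\leq x}$ and $c_2(x)=\max\set{c\in C_2: c\leq x}$, and define $\phi\colon L\to C_1\times C_2$ by $\phi(x)=\pair{c_1(x)}{c_2(x)}$. The central claim is that $\phi$ is an order embedding and that $x=c_1(x)\vee c_2(x)$ for every $x\in L$. Realise $C_1\times C_2$ as a grid in the plane with $C_1$ ascending along the $135^\circ$ diagonal on the left and $C_2$ along the $45^\circ$ diagonal on the right, and place each $x\in L$ at the grid point $\phi(x)$, drawing the covering edges as grid segments. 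The resulting diagram is automatically planar since all edges lie on grid lines, and by construction its left and right boundary chains are $C_1$ and $C_2$.

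The main obstacle is the embedding claim, and specifically the identity $x=c_1(x)\vee c_2(x)$. This reduces, via $\Jir L\subseteq C_1\cup C_2$, to showing that every element of $L$ is determined by the largest elements of $C_1$ and $C_2$ below it; the key input is that in a slim semimodular lattice each element has a canonical irredundant join-decomposition into join-irreducibles, and those join-irreducibles split into the $C_1$-part and the $C_2$-part. A secondary technicality, arising from join-irreducibles that lie in $C_1\cap C_2$ or from edges of the grid that pass through unintended vertices, can be dispatched by a small perturbation of the grid positions, as already used in the proof of Lemma~\ref{lemmaalfadefok}.
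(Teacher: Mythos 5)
The paper does not actually prove this lemma: it is imported from Cz\'edli and Schmidt \cite{czgschtJH}, Lemma 2.2, so you are reconstructing a cited result rather than a proof given in this paper. Your uniqueness argument via Lemma~\ref{lemmaleftbounddeterm} is correct, the reduction to $\Nar L=\set{0,1}$ is sound (though your justification of $\Jir{[d_{i-1},d_i]}\subseteq C_1\cup C_2$ runs in the wrong direction: what you need is that a join-irreducible of the interval other than $d_{i-1}$ is join-irreducible in $L$, which holds because every lower cover in $L$ of such an element is comparable to the narrows $d_{i-1}$ and hence lies in the interval), and the identity $x=c_1(x)\vee c_2(x)$ together with the order-embedding property of $\phi$ is fine and easy.

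The genuine gap is the planarity of your drawing. The justification ``automatically planar since all edges lie on grid lines'' is false: a covering edge $x\prec y$ of $L$ need not project to a covering edge, or even to a grid-line segment, of $C_1\times C_2$. Already in the four-element Boolean lattice with $C_1=\set{0,a,1}$ and $C_2=\set{0,b,1}$, the cover $a\prec 1$ has $\phi(a)=\pair{a}{0}$ and $\phi(1)=\pair{1}{1}$, so the first coordinate rises one covering step while the second rises two; the resulting segment has slope neither $45^\circ$ nor $135^\circ$. Hence the edges of your picture are arbitrary segments between grid points, and the absence of crossings --- as well as the claim that $C_1$ and $C_2$ come out as the leftmost and rightmost maximal chains --- is precisely the nontrivial content of the lemma. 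It is here that slimness and semimodularity must actually be used, and your sketch invokes neither beyond the join-generation property. To close the gap you would either have to verify directly that no two such segments cross (a real combinatorial argument about $\phi$), or first conclude from the embedding $\phi$ that $L$ has order dimension at most $2$, invoke the Kelly--Rival planarity criterion to obtain some planar diagram, and then separately argue that the boundary chains can be arranged to be $C_1$ and $C_2$. As written, the central step is asserted rather than proved.
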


\subsection{Join and meet representations in slim, semimodular lattices}
\begin{definition}\label{sprtdeF}
For $x$ in a planar lattice diagram $ D$, the largest element of $\ideal x\cap \lbound D$ and that of $\ideal x\cap \rbound D$ are the \emph{left support} of $x$, denoted by $\lsp x$, and the 
\emph{right support} of $x$, denoted by $\rsp x$, respectively. 
\end{definition}

It follows from the definition of slimness that 
\begin{equation}\label{supplrgx}
x=\lsp x\vee \rsp x\text{, for all }x\in D,
\end{equation}
provided $D$ is a planar, slim lattice diagram.

\begin{lemma}\label{lemmalsprsplambda}
For $x\parallel y$ in a planar, slim, semimodular lattice diagram $D$, we have $x\lft y$ if{f} $\,\lsp x>\lsp y$ and $\rsp x<\rsp y$. Furthermore, $x\leq y$ if{f} $\,\lsp x\leq \lsp y$ and $\rsp x\leq\rsp y$
\end{lemma}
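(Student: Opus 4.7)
The \emph{furthermore} part will be a direct consequence of \eqref{supplrgx}. The forward direction reduces to the observation $\ideal x\cap\lbound D\subseteq \ideal y\cap\lbound D$ (and analogously for the right boundary), from which $\lsp x\le\lsp y$ and $\rsp x\le\rsp y$ fall out by the maximality in the definition of $\lsp{}$ and $\rsp{}$. For the backward direction, one line suffices: $x=\lsp x\vee\rsp x\le\lsp y\vee\rsp y=y$.

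For the main equivalence, my plan is to first prove $x\lft y\then \lsp x\ge\lsp y$ by comparing $\lsp y$ with $x$. If $\lsp y\le x$, then $\lsp y\in\ideal x\cap\lbound D$, giving $\lsp y\le \lsp x$. The case $\lsp y>x$ is impossible because $\lsp y\le y$ would force $x<y$ against $x\parallel y$. The crucial case is $\lsp y\parallel x$: here I would use that $\lbound D$ is a maximal chain through $\lsp y$, and since no element is strictly on the left of the leftmost maximal chain, Lemma~\ref{leftrightlemma} yields $\lsp y\lft x$; then the transitivity rule in Lemma~\ref{lfrhtrRlemma}\eqref{lfrhtrRlemmaa} together with $x\lft y$ produces $\lsp y\lft y$, contradicting $\lsp y\le y$. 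This extraction of $\lsp y\lft x$ from the boundary-position of $\lsp y$ is the only nontrivial step; everything else is essentially bookkeeping with \eqref{supplrgx} and the trichotomy for $\slft$.

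To upgrade $\lsp x\ge\lsp y$ to strict inequality, I would note that $\lsp x=\lsp y$ combined with the comparability of $\rsp x,\rsp y$ on $\rbound D$ would, via \eqref{supplrgx}, force $x$ and $y$ to be comparable, against $x\parallel y$. The inequality $\rsp x<\rsp y$ then follows by the left-right dual of the whole preceding argument, applied to $y\rght x$ (which is just $x\lft y$ phrased from the other side). Finally, for the converse of the main equivalence, I would invoke Lemma~\ref{lfrhtrRlemma}\eqref{lfrhtrRlemmab}: $x\parallel y$ forces $x\lft y$ or $y\lft x$, and in the latter case the forward direction already proved would give $\lsp y>\lsp x$, contradicting the hypothesis.
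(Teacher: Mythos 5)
Your proof is correct and follows the same skeleton as the paper's: both first rule out $\lsp x=\lsp y$ via slimness and \eqref{supplrgx}, both extract $\lsp y\lft x$ from the fact that $x$ must lie strictly on the right of the left boundary chain, which is a maximal chain through $\lsp y$, both get $\rsp x<\rsp y$ by left-right duality, and both obtain the converse of the main equivalence from trichotomy (Lemma~\ref{lfrhtrRlemma}\eqref{lfrhtrRlemmab}) plus the already-proved forward direction. The one place you genuinely diverge is in closing the critical case $\lsp y\parallel x$: you apply the transitivity rule, Lemma~\ref{lfrhtrRlemma}\eqref{lfrhtrRlemmaa}, to $\lsp y\lft x\lft y$ and read off the contradiction $\lsp y\lft y$ versus $\lsp y\leq y$, whereas the paper instead takes a maximal chain $C$ through $\set{\lsp x,x}$, invokes the Kelly--Rival crossing lemma (Lemma~\ref{krchainlemma}) to produce $c\in C$ with $\lsp y\leq c\leq y$, and then derives $\lsp y<x$, contradicting the maximality of $\lsp x$ in $\ideal x\cap\lbound D$. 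Your route is shorter and avoids the auxiliary chain; it does lean on Lemma~\ref{lfrhtrRlemma}\eqref{lfrhtrRlemmaa} producing a conclusion that is incompatible with comparability, but that is exactly how the paper itself deploys that lemma elsewhere (for instance in Case 1 of the proof of Lemma~\ref{nhVbztZTsV}), so the usage is legitimate. The ``furthermore'' part is dismissed as obvious in the paper; your one-line argument via \eqref{supplrgx} is what is intended.
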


\begin{proof} Assume $x\lft y$. If $\lsp x=\lsp y$, then $\rsp x\nonparallel \rsp y$ since $\rbound D$ is a chain, and \eqref{supplrgx} gives $x\nonparallel y$. Hence, $\lsp x\neq \lsp y$ and $\rsp x\neq \rsp y$. 

Assume $x\lft y$. Striving for a contradiction, suppose $\lsp x<\lsp y$.
By the definition of $\lsp x$, we have $\lsp y\not\leq x$. On the other hand, $x\parallel y\geq \lsp y$ implies $\lsp y\not\geq x$. That is, $\lsp y\parallel x$. Since $x$ is on the right of $\lbound D$, Lemma~\ref{leftrightlemma} yields $\lsp y\lft x$. Take a  
a maximal chain $C$ through $\set{\lsp x,x}$. Lemma~\ref{leftrightlemma}, $\lsp y\lft x$, and $x\lft y$ yield that $\lsp y$ is on the right of $C$ and $y$ is on the left of $C$. Hence, by 
Lemma~\ref{krchainlemma}, there exists a $c\in C$ such that $\lsp y\leq c\leq y$. Belonging to the same chain, $c$ and $x$ are comparable. Since $x\not\leq y$, we conclude $c<x$. However, then $\lsp x<\lsp y\leq c<x$ and $\lsp y\in \lbound D$ contradict the definition of $\lsp x$. 

Therefore, $x\lft y$ implies $\lsp x>\lsp y$. By left-right duality, it also implies $\rsp x<\rsp y$. This proves the ``only if'' part of the lemma.
To prove the ``if'' part, assume $\lsp x>\lsp y$ and $\rsp x<\rsp y$. Clearly, $x\parallel y$. We cannot have $y\lft x$ since it would contradict the ``only if'' part. Thus $x\lft y$.

Finally, the second statement of the lemma is obvious.
\end{proof}

As a counterpart of Definition~\ref{sprtdeF}, we present the following concept.

\begin{definition}Let $D$ be a finite, slim, semimodular lattice diagram, and let $b\in D\setminus\set 1$. The \emph{left dual support} and the \emph{right dual support} of $b$, denoted by $\lds b$ and $\rds b$,  are the leftmost and the rightmost element of the antichain
$\Min{(\filter b\cap\Mir D)}$, respectively. 
\end{definition}

A meet $x_1\wedge\dots\wedge x_n$ in a lattice is \emph{irredundant} if 
\[x_1\wedge\dots \wedge x_{i-1}\wedge x_{i+1}\wedge\dots \wedge x_n\neq x_1\wedge\dots\wedge x_n\]
for $i=1,\dots,n$. 

\begin{lemma}\label{mdiWrT} Let $D$ be a finite, slim, semimodular lattice diagram, and let $b\in D\setminus\set 1$. Then $b=\lds b\wedge \rds b$. Furthermore, if 
$X\subseteq \Mir L$ such that $b=\bigwedge X$ is an irredundant meet representation of $b$, then $X=\set{\lds b,\rds b}$.
\end{lemma}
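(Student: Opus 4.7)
The plan is to split the lemma into its two assertions and handle them in turn.

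\emph{First, $b = \lds b \wedge \rds b$.} I will start from the standard fact that $b<1$ in a finite lattice gives $b = \bigwedge(\filter b \cap \Mir L) = \bigwedge M$, where $M := \Min(\filter b \cap \Mir L)$. The inequality $b \leq \lds b \wedge \rds b$ is immediate. For the reverse, I will establish $\lds b \wedge \rds b \leq m$ for every $m \in M$: this is trivial when $m \in \set{\lds b,\rds b}$, and for the remaining $m$ the leftmost/rightmost defining property of $\lds b$ and $\rds b$ in the antichain $M$ forces $\lds b \lft m \lft \rds b$, so $\set{\lds b,m,\rds b}$ is a three-element antichain with $m$ horizontally between $\lds b$ and $\rds b$; Lemma~\ref{lMakoztes}\eqref{lMakoztesa} then delivers $\lds b \wedge \rds b \leq m$. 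Taking the meet over $M$ completes the first assertion.

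\emph{Second, uniqueness of $X$.} I will first note that $X$ is an antichain in $\filter b \cap \Mir L$, since any comparability $x_1<x_2$ in $X$ would make $x_2$ redundant. If $b\in\Mir L$, then $\lds b=\rds b=b$; the unique upper cover $b^+$ of $b$ lies below every $y>b$, so if $b\notin X$ then $\bigwedge X \geq b^+>b$, contradicting $\bigwedge X=b$; hence $b\in X$, and irredundancy forces $X=\set b$. If $b\notin\Mir L$, then $|M|\geq 2$, the elements $\lds b$ and $\rds b$ are distinct meet-irreducibles strictly above $b$, and the first assertion already exhibits $\set{\lds b,\rds b}$ as an irredundant meet representation of $b$. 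Since slim semimodular lattices are join-distributive (by the result cited in the introduction) and join-distributivity is characterized, in Dilworth's original formulation, by the uniqueness of the irredundant meet-irreducible decomposition, $X$ must coincide with $\set{\lds b,\rds b}$.

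\emph{Main obstacle.} The delicate step is the invocation of uniqueness: the paper cites join-distributivity but does not reprove Dilworth's equivalence within the text. A self-contained substitute will run through Lemma~\ref{lMaszdobzF}. For each $x\in X$ I will set $c_x:=\bigwedge(X\setminus\set x)$, which strictly exceeds $b$ by irredundancy and satisfies $x\wedge c_x=b\leq\lds b$; when $\lds b\leq c_x$, Lemma~\ref{lMaszdobzF} applied with $a=\lds b$ and $c=c_x$ (in the role of the lemma's $b$, I take $x$) yields $x\leq\lds b$, and minimality of $\lds b$ in $\filter b\cap\Mir L$ forces $x=\lds b$. The borderline $\lds b\not\leq c_x$ is where the case analysis has to be worked out carefully: it produces some $y\in X\setminus\set x$ with $\lds b\not\leq y$, allowing the argument to be re-applied. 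A left-right dual argument places $\rds b$ in $X$, and a final check using the first assertion precludes any further members of $X$.
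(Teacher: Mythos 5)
Your proof is correct and follows essentially the same route as the paper: it starts from $b=\bigwedge \Min{(\filter b\cap\Mir D)}$, uses Lemma~\ref{lMakoztes}\eqref{lMakoztesa} to obtain $b=\lds b\wedge\rds b$, and settles the uniqueness of the irredundant representation by citing that slim semimodular lattices are join-distributive and that irredundant meet decompositions in join-distributive lattices are unique (Dilworth), exactly as the paper does. The appended sketch of a self-contained substitute via Lemma~\ref{lMaszdobzF} is not needed and is left incomplete, so the citation argument remains the actual proof, which is fine.
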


\begin{proof} Obviously, $b=\bigwedge \Min{(\filter b\cap\Mir D)}$.  Lemma~\ref{lMakoztes}\eqref{lMakoztesa} implies $b=\lds b\wedge \rds b$. 
If $\lds b\neq \rds b$, then $\lds b\parallel \rds b$ and $b=\lds b\wedge \rds b$ is an irredundant-meet representation. Hence, with the notation  $Y=\set{\lds b,\rds b}$, $b=\bigwedge Y$ is an irredundant meet-representation, even if $\lds b = \rds b$. Since slim semimodular lattices are join-distributive, 
see \init{G.\ }Cz\'edli, {L.\ }Ozsv\'art, and  \init{B.\ }Udvari \cite[Corollary 2.2]{czgolub}, and the irredundant meet-representation in join-distributive are unique by \init{R.\,P.\ }Dilworth \cite{r:dilworth40}, the rest of the lemma follows.  
\end{proof}

As a counterpart of Lemma~\ref{lemmalsprsplambda}, we have the following.

%itt tartok, 12:45

\begin{lemma}\label{lMadsplrdu}
Let $x$ and $y$ be elements of a planar, slim, semimodular lattice diagram $D$. Then the following two assertions hold.
\begin{enumeratei}
\item\label{lMadsplrdua} $x\leq y$ if{f} $\,\lds x \leqlft \lds y$ and $\rds x\leqrght \rds y$;
\item\label{lMadsplrdub} $x\lft y$ if{f} $\,\lds x \leslft \lds y$ and $\rds x\bigglft \rds y$.
\end{enumeratei}
\end{lemma}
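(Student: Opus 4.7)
My plan is to prove the two forward implications directly and then derive the converses by the five-way trichotomy: for any pair $x,y \in D$, exactly one of $x<y$, $x=y$, $x>y$, $x \lft y$, $x \rght y$ holds.

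Part (i) $\Rightarrow$ is straightforward. Assume $x \leq y$. Since $\lds y \in \filter x \cap \Mir D$, there exists $m \in \Min(\filter x \cap \Mir D)$ with $m \leq \lds y$. If $m = \lds x$, then $\lds x \leq \lds y$; otherwise $\lds x \lft m$ by leftmost-ness of $\lds x$ in this antichain, and Lemma~\ref{lfrhtrRlemma}\eqref{lfrhtrRlemmac} applied to $\lds x \lft m$ with $\lds y \nonparallel m$ yields $\lds x \lft \lds y$ or $\lds x \nonparallel \lds y$; in the latter case, $\lds x > \lds y$ contradicts $\lds x \parallel m \leq \lds y$, leaving $\lds x \leq \lds y$. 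The analogue $\rds x \leqrght \rds y$ is left-right symmetric.

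Part (ii) $\Rightarrow$ is the heart of the proof. Assume $x \lft y$ and suppose for contradiction that $\lds x \leslft \lds y$ fails, so $\lds x = \lds y$, $\lds x > \lds y$, or $\lds x \rght \lds y$. In the case $\lds x = \lds y = m$, first observe $\rds x \neq m$ and $\rds y \neq m$ (else by $b = \lds b \wedge \rds b$ from Lemma~\ref{mdiWrT}, $x$ or $y$ equals $m$, contradicting $x \parallel y$), hence $m \lft \rds x$ and $m \lft \rds y$. If $\rds x$ and $\rds y$ are comparable or equal, then $x = m \wedge \rds x$ and $y = m \wedge \rds y$ become comparable, contradiction. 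For $\rds x \parallel \rds y$, Lemma~\ref{lfrhtrRlemma}\eqref{lfrhtrRlemmac} applied to $x \lft y$ with $a = \rds y$ yields $x \lft \rds y$ or $\rds y \geq x$, and its $\rght$-dual applied to $y \rght x$ with $a = \rds x$ yields $\rds x \lft y$ or $\rds x \geq y$; each of the four resulting sub-cases yields a contradiction: $\rds x \lft y$ combined with $m \lft \rds x$ gives $m \lft y$ by transitivity (Lemma~\ref{lfrhtrRlemma}\eqref{lfrhtrRlemmaa}), contradicting $y \leq m$; $\rds y \geq x$ together with $\rds x \geq y$ force $\rds x \wedge \rds y \wedge m$ to equal both $x$ and $y$, yielding $x = y$; and $x \lft \rds y$ with $\rds x \geq y$ forces $y \leq m \wedge \rds x = x$, contradiction. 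The case $\lds x > \lds y$ is handled by showing (via the $\rght$-dual of Lemma~\ref{lfrhtrRlemma}\eqref{lfrhtrRlemmac} applied to $y \rght x$, $\rds x \nonparallel x$) that $\rds x \lft y$; this combined with $\lds x \elft \rds x$ and transitivity of $\lft$ forces $\lds x = \rds x$, hence $x = \lds x > \lds y \geq y$, contradiction. The case $\lds x \rght \lds y$ is handled by sub-casing on $x$ vs $\lds y$: $x \leq \lds y$ gives $\lds x \leqlft \lds y$ by the argument of (i) $\Rightarrow$, contradicting $\lds x \rght \lds y$; $x \geq \lds y$ yields $y \leq x$; and $x \parallel \lds y$ combined with Lemma~\ref{lfrhtrRlemma}\eqref{lfrhtrRlemmac} applied to $x \lft y$ with $a = \lds y$ gives $x \lft \lds y$, so transitivity with $\lds y \lft \lds x$ yields $x \lft \lds x$, contradicting $x \leq \lds x$. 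Finally, $\rds x \bigglft \rds y$ follows by left-right symmetry.

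For the converse directions, the five-way trichotomy combined with the already-established forward implications (and left-right symmetry handling the $\rght$ case) fully determines the relation between $\pair{\lds x}{\rds x}$ and $\pair{\lds y}{\rds y}$ in each of the five cases; matching against the hypothesized inequalities pinpoints $x \leq y$ (for (i) $\Leftarrow$) or $x \lft y$ (for (ii) $\Leftarrow$). The main obstacle is the four-way sub-case analysis in the $\lds x = \lds y$ case of (ii) $\Rightarrow$, which requires delicately chaining Lemma~\ref{lfrhtrRlemma} with the identity from Lemma~\ref{mdiWrT} and the transitivity of $\lft$.
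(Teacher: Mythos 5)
Your proof is correct, but it takes a genuinely different route from the paper's. The paper handles the ``only if'' directions geometrically: part (i) via Lemma~\ref{lMakoztes}\eqref{lMakoztesb} and the semimodularity lemma (Lemma~\ref{lMaszdobzF}), and the three cases $\lds x=\lds y$, $\lds x>\lds y$, $\lds y\lft \lds x$ of part (ii) via Lemma~\ref{lMakoztes}\eqref{lMakoztesa} and, in the latter two cases, explicitly constructed maximal chains combined with the Kelly--Rival Lemmas~\ref{krchainlemma} and \ref{leftrightlemma}; furthermore, the ``if'' part of (i) is proved there directly by a four-case analysis, and only the ``if'' part of (ii) is obtained by the exclusion argument you use. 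You instead settle the same three cases of (ii) purely order-theoretically, using only the quasiplanarity rules of Lemma~\ref{lfrhtrRlemma} together with $b=\lds b\wedge\rds b$ from Lemma~\ref{mdiWrT}, so no maximal chain and no appeal to Lemmas~\ref{lMakoztes} and \ref{lMaszdobzF} is needed; and you obtain both converses simultaneously from the five-way trichotomy, which spares you the paper's separate direct proof of the ``if'' part of (i). The paper's route keeps the geometric content (betweenness versus meets, chains separating sides) explicit; your route is leaner, with slimness and semimodularity entering only through Lemma~\ref{mdiWrT}. Two small steps should be inserted to make your sketch airtight: (1) in the case $\lds x>\lds y$ (and likewise when you state the two dichotomies in the case $\lds x=\lds y$), the dual of Lemma~\ref{lfrhtrRlemma}\eqref{lfrhtrRlemmac} only yields ``$\rds x\lft y$ or $\rds x\nonparallel y$'', and the alternative $\rds x\nonparallel y$ must still be discarded: $\rds x\leq y$ gives $x\leq y$, while $\rds x\geq y$ together with $y\leq\lds y<\lds x$ gives $y\leq \lds x\wedge\rds x=x$; (2) in the converse of (i), excluding $x>y$ is not a pure incompatibility of the displayed relations: the two forward implications only force $\lds x=\lds y$ and $\rds x=\rds y$, and you then need the injectivity of $b\mapsto\pair{\lds b}{\rds b}$, supplied by Lemma~\ref{mdiWrT}, to reach the contradiction $x=y$. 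Both are one-line insertions, so the argument stands.
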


\begin{proof} We shall use the identity $b=\lds b\wedge \rds b$ of Lemma~\ref{mdiWrT} without further reference. 
Assume $x\leq y$. Then $\lds x\wedge \rds x\leq y\leq \lds y$. If $\lds x\parallel \lds y$, then $\lds x\lft \rds x$, and  $\lds x\wedge \rds x\leq \lds y$ implies $\lds x\lft \lds  y$ by Lemma~\ref{lMakoztes}\eqref{lMakoztesb}. If $\lds y<\lds x$, then Lemma~\ref{lMaszdobzF}  with 
\[\tuple{a,b,c}=\tuple{\lds y, \rds x, \lds x}\]
implies $\rds x\leq \lds y<\lds x$, which is a contradiction since $\lds x\parallel \rds x$ or $\lds x = \rds x$. Thus if 
$\lds x\nonparallel \lds y$, then $\lds x\leq \lds y$. Hence,  $\lds x \leqlft \lds y$; $\rds x\leqrght \rds y$ follows  by left-right duality. This proves the ``only if'' part of \eqref{lMadsplrdua}.

To prove the ``if'' part, assume $\lds x \leqlft \lds y$ and $\rds x\leqrght \rds y$. If $\lds x \lft \lds y$ and $\rds x\rght \rds y$, then $\lds x\lft\lds y \elft \rds y\lft \rds x$ and Lemma~\ref{lMakoztes}\eqref{lMakoztesa}  
imply $x=\lds x\wedge \rds y \leq \lds y$ and $x\leq \rds y$, and we obtain $x\leq y$. If $\lds x \leq \lds y$ and $\rds x\leq \rds y$, then $x\leq y$ trivially follows. There are two more cases; we only deal with one of them, because the other one will follow by left-right duality. Assume $\lds x \leq \lds y$ and $\rds x\rght \rds y$. If $\lds x\nonparallel \rds y$, then $\lds x\leq \rds y$ and thus $x\leq \lds x\leq \lds y\wedge \rds y=y$, because $\lds x > \rds y$ would imply $\lds y\geq x>\rds y$, a contradiction.  If $\lds x\parallel \rds y$, then  $\lds y\lft \rds y$ and  Lemma~\ref{leftrightlemma}, applied to a maximal chain through $\set{\lds x,\lds y}$, yield $\lds x\lft \rds y\lft \rds x$, Lemma~\ref{lMakoztes}\eqref{lMakoztesb} gives $\lds x\wedge\rds x\leq \rds y$, and we conclude  
$x= \lds x\wedge\rds x\leq \lds y\wedge\rds y=y$ again. This proves \eqref{lMadsplrdua}. 

To prove the ``only if'' part of \eqref{lMadsplrdub}, assume $x\lft y$. Striving for a contradiction, suppose $\lds x=\lds y$. We have $\rds x\parallel \rds y$ since otherwise $x=\lds x\wedge \rds x$ and $y=\lds x\wedge \rds y$ would be comparable. If  $\rds x\lft \rds y$, then $\lds y=\lds x \lft \rds x\lft \rds y$ and Lemma~\ref{lMakoztes}\eqref{lMakoztesa} imply $x\leq \rds x\leq \lds y\wedge\rds y=y$, a contradiction. Similarly, if  $\rds y\lft \rds x$, then $\lds x=\lds y \lft \rds y\lft \rds x$ and Lemma~\ref{lMakoztes}\eqref{lMakoztesa} imply $y\leq \rds y\leq \lds x\wedge\rds x=x$, a contradiction again. This proves $\lds x\neq \lds y$. 

Next, aiming at contradiction again, suppose $\lds x > \lds y$. Extend the chain $\set{y\leq \lds y<\lds x}$ to a maximal chain $C_1$. Since $x\lft y$ and $\lds x\elft \rds x$, we obtain that  $x$ is on the left of $C_1$ and $\rds x$ is on the right of $C_1$.  Lemma~\ref{krchainlemma} yield an element $z\in C_1$ such that $x\leq z\leq \rds x$. We have $x<z$ since $x\notin C_1$, and $\lds x\not<z$ since $\lds x\not<\rds x$. Belonging to the same chain, $z$ and $\lds x$ are comparable, and we obtain $z\leq \lds x$. This gives $x<z\leq \lds x\wedge \rds x=x$, a contradiction. Therefore, $\lds x\not>\lds y$.

For the sake of the next contradiction, suppose $\lds y\lft \lds x$. Extend $\set{x,\lds x}$ to a maximal chain $C_2$. Since $\lds y\lft \lds x$, $\lds y$ is on the left of $C_2$, while $x\lft y$ yields that  $y$ is on the right of $C_2$. Hence Lemma~\ref{krchainlemma}  applies, and we obtain an element $z\in C_2$ such that $y\leq z\leq \lds y$. 
Since  $z\nonparallel x$, as both belong to $C_2$,  and $x\not>y$, we have $x<z$, and thus $x<\lds y$. Now the set $\filter x\cap\ideal{\lds y}\cap\Mir D$ is nonempty since it contains $\lds y$. Let $t$ be a minimal element of this set. Clearly, $t$ belongs to the antichain $\Min{(\filter x\cap\Mir D)}$. Since  $\lds x$ is the leftmost element of this antichain, we have $\lds x\elft t$. We cannot have $\lds x=t$, because otherwise $\lds x=t\leq \lds y$ would contradict $\lds y\lft \lds x$. Hence $\lds x \lft t$. 
Now extend $\set{t,\lds y}$ to a maximal chain $C_3$. Then $\lds x$ is on the left of $C_3$ since $\lds x\lft t$, and $\lds x$ is also on the right of $C_3$ since $\lds y\lft \lds x$. Therefore, $\lds x\in C_3$ and thus $\lds x\nonparallel \lds y$, which contradicts $\lds y\lft \lds x$. This proves that $\lds y\lft \lds x$ is impossible. 

Now, that we have excluded all other possibilities, we conclude that $x\lft y$ implies $\lds x \leslft \lds y$. By left-right duality, it also implies  $\rds x\bigglft \rds y$. This proves the ``only if'' part of \eqref{lMadsplrdub}. 
Finally, to prove the ````if'' part of \eqref{lMadsplrdub}, assume $\lds x \leslft \lds y$ and $\rds x\bigglft \rds y$. Part \eqref{lMadsplrdua} excludes $x\nonparallel y$, and the ``only if'' part of \eqref{lMadsplrdub} excludes $y\lft x$. Hence, $x\lft y$.
\end{proof}

%itt tartok, 13:31

\subsection{Further auxiliary statements}
\begin{lemma}\label{MIRdescR}  If $Q$ is a quasiplanar diagram, then 
\[\Mir{\tuple{\Hco Q;\dleq}}= \set{\filter x: x\in Q\setminus\set{0,1}}\text.
\]
\end{lemma}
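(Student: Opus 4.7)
The plan is to identify meet-irreducibility in $\tuple{\Hco Q;\dleq}$ with the existence of a unique upper cover. Since $\dleq$ is reverse inclusion, an upper cover of a non-top $M \in \Hco Q$ is a maximal proper subset of $M$ lying in $\Hco Q$; the case analysis in the proof of Lemma~\ref{nhVbztZTsV} shows that every such cover has the form $M \setminus \set u$ for some $u \in M$ with $M \setminus \set u \in \Hco Q$. Hence $M$ is meet-irreducible if and only if $M \neq \set 1$ and there is exactly one such $u$.

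For the inclusion ``$\supseteq$'', fix $x \in Q \setminus \set{0,1}$. The set $\filter x$ is clearly an order filter; for horizontal convexity, given $a \lft b \lft c$ with $a,c \geq x$, Lemma~\ref{lfrhtrRlemma}\eqref{lfrhtrRlemmac} applied to $a \lft b$ and $x \nonparallel a$ gives $x \lft b$ or $x \nonparallel b$. The case $x \lft b$ combined with $b \lft c$ and Lemma~\ref{lfrhtrRlemma}\eqref{lfrhtrRlemmaa} would force $x \lft c$, contradicting $x \leq c$; the case $x \geq b$ combined with $x \leq c$ yields $b \leq c$, contradicting $b \lft c$. Hence $x \leq b$. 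The same argument, together with the observation that $a \lft b$ and $a \geq x$ force $b \neq x$, shows $\filter x \setminus \set x \in \Hco Q$. For uniqueness, any $v \in \filter x$ with $v \neq x$ satisfies $v > x$, so $\filter x \setminus \set v$ fails to be an order filter (witness: $x \in \filter x \setminus \set v$ and $v \geq x$). Thus $\filter x$ has the unique upper cover $\filter x \setminus \set x$ and is meet-irreducible.

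For ``$\subseteq$'', let $M$ be meet-irreducible. By Lemma~\ref{phipilemMa}, $M = \hcofilter{\set{x,y}}$ with $\pair x y = \pair{\lmost M}{\rmost M}$. If $x \lft y$, I claim both $M \setminus \set x$ and $M \setminus \set y$ lie in $\Hco Q$, furnishing two distinct upper covers and contradicting meet-irreducibility. Focusing on $M \setminus \set x$: it is nonempty (contains $y$), and since $x = \lmost M$ is minimal in $M$, the order-filter property follows as above. For horizontal convexity, suppose $c \lft d \lft e$ with $c,e \in M \setminus \set x$ and $d \in M$; if $d = x$ then $c \lft x$, and by Lemma~\ref{sdGT}\eqref{sdGTc} there is $t \in \MinBetw x y$ with $t \leq c$. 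Necessarily $t \neq x$ (else $x \leq c$ contradicts $c \lft x$), so $x \lft t$, and Lemma~\ref{lfrhtrRlemma}\eqref{lfrhtrRlemmac} applied to $x \lft t$ with $c \nonparallel t$ yields $x \lft c$ or $x \nonparallel c$, each contradicting $c \lft x$. By left-right symmetry $M \setminus \set y \in \Hco Q$, giving the promised second upper cover. Hence $x = y$ and $M = \filter x$; meet-irreducibility rules out $M = \set 1$, so $x \neq 1$, and $x \in \plu Q$ gives $x \neq 0$.

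The main obstacle is the horizontal-convexity verification for $M \setminus \set x$ (and similarly for $\filter x$), which requires a careful interplay of parts~\eqref{lfrhtrRlemmaa} and~\eqref{lfrhtrRlemmac} of Lemma~\ref{lfrhtrRlemma} with Lemma~\ref{sdGT}\eqref{sdGTc}; the remaining steps are routine bookkeeping using the correspondences established in Lemmas~\ref{phipilemMa} and~\ref{nhVbztZTsV}.
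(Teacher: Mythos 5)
Your proposal is correct, but its second half runs along a different track than the paper's. Both arguments share the same skeleton: write $M=\hcofilter{\set{x,y}}$ with $\pair{x}{y}=\pair{\lmost M}{\rmost M}$ via Lemma~\ref{phipilemMa}, use the unique-upper-cover characterization of meet-irreducibility, and in the case $x=y$ observe that $\filter x\setminus\set x$ is the unique upper cover of $\filter x$. In the case $x\lft y$, however, the paper is done in two lines: $M=\filter x\wedge\filter y$ in $\tuple{\Hco Q;\dleq}$ with $M\notin\set{\filter x,\filter y}$, so $M$ is meet-reducible. You instead exhibit two distinct upper covers $M\setminus\set x$ and $M\setminus\set y$, which obliges you to verify horizontal convexity by hand; your verification is sound (it amounts to showing that no element of $M$ lies strictly to the left of $\lmost M$), and it yields a more explicit picture of the covers, at the price of being longer. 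Your route also relies on the fact that covering pairs of $\tuple{\Hco Q;\subseteq}$ differ by a single element, which you extract from the proof (not the statement) of Lemma~\ref{nhVbztZTsV}; the extraction is legitimate, but the paper never needs it: any proper hco-subfilter of $\filter x$ omits $x$, hence is contained in $\filter x\setminus\set x$, which already gives uniqueness of the upper cover without any counting of cover sizes. Two cosmetic points: in your convexity check for $\filter x$ the comparability hypothesis $x\nonparallel a$ concerns the left member of $a\lft b$, so what you actually apply is the left-right dual of Lemma~\ref{lfrhtrRlemma}\eqref{lfrhtrRlemmac} (which indeed follows from parts \eqref{lfrhtrRlemmaa} and \eqref{lfrhtrRlemmab}, or by symmetry); and $\filter x\in\Hco Q$ is already contained in Lemma~\ref{sdGT}\eqref{sdGTa}, so that part could be cited rather than reproved.
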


\begin{proof} Let $F\in \Hco Q$. By Lemma~\ref{phipilemMa}, $F$ is of the form $F=\hcofilter\set{x,y}$, where $x=\lmost F$, $y=\rmost F$, and   $x\elft y \in \plu Q$. First, assume  $x=y$.  Then $F=\filter x$ by Lemma~\ref{sdGT}\eqref{sdGTa}. Clearly, $F\setminus \set x\in \Hco Q$, and it is the unique lower cover of $F$ with respect to set inclusion. Hence, $F\setminus \set x$ is the unique upper cover of $F$ in  the lattice $\tuple{\Hco Q;\dleq}$. That is, $F\in \Mir{\tuple{\Hco Q;\dleq}}$, proving the ``$\supseteq$'' part of the lemma.

Next, assume $x\neq y$. Obviously, 
$F\neq \filter x$ and $F\neq \filter y$. By Lemma~\ref{sdGT}\eqref{sdGTa}, $\filter x,\filter y\in \Hco Q$. Clearly, 
$F=\filter x\vee \filter y$ in the dual lattice $\tuple{\Hco Q;\subseteq}$. Thus $F=\filter x\wedge \filter y$ in 
$\tuple{\Hco Q;\dleq}$, and $F\notin \Mir{\tuple{\Hco Q;\dleq}}$. This proves $\not\supset$.
\end{proof}

\begin{lemma}\label{uAlftzm} Let $Q$ be a quasiplanar diagram, and let $x,y\in Q$. Then $x\lft y$ in $Q$ if{f}  $\filter x\lft \filter y$ in $\beta_2(Q)$.
\end{lemma}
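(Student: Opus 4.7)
The plan is to apply Lemma~\ref{lemmalsprsplambda} to the planar slim semimodular lattice diagram $\beta_2(Q)$ and then analyze the ``enumeration'' built into $\vec F$ and $\vec G$. By the definition of $\beta_2(Q)$, $\lbound{\beta_2(Q)}=\vec F$ and $\rbound{\beta_2(Q)}=\vec G$. Since $\dleq$ is reverse inclusion and each $F_i$ is an up-set of $Q$, the condition $F_i\dleq \filter x$ is equivalent to $x\in F_i$; hence the $\dleq$-largest $F_i\dleq \filter x$ is the $\subseteq$-smallest $F_i$ containing $x$. Writing $i(x)$ for its index and $j(x)$ for the analogous index in $\vec G$, we obtain $\lsp{\filter x}=F_{i(x)}$ and $\rsp{\filter x}=G_{j(x)}$, so by Lemma~\ref{lemmalsprsplambda}, $\filter x\lft \filter y$ in $\beta_2(Q)$ if and only if $i(x)<i(y)$ and $j(x)>j(y)$. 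The lemma therefore reduces to the equivalence of $x\lft y$ with the conjunction $i(x)<i(y)$ and $j(x)>j(y)$.

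For the forward direction, assume $x\lft y$ and set $n=i(y)$, so that $y=f_{n-1}$ is the leftmost element of $\Max(Q\setminus F_{n-1})$. Suppose towards a contradiction that $x\notin F_{n-1}$; by finiteness pick $u\in\Max(Q\setminus F_{n-1})$ with $x\leq u$. Then $u\neq y$, since $u=y$ would give $x\leq y$, contradicting $x\parallel y$. The leftmost-choice of $y$ forces $y\lft u$, and Lemma~\ref{lfrhtrRlemma}\eqref{lfrhtrRlemmaa} then yields $x\lft u$, contradicting $x\leq u$. Therefore $x\in F_{n-1}$, so $i(x)<n=i(y)$. The inequality $j(x)>j(y)$ follows by left-right duality applied verbatim to the construction of $\vec G$.

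For the converse, assume $i(x)<i(y)$ and $j(x)>j(y)$. If $x\leq y$, then every $F_i$ containing $x$ contains $y$ as well (by the up-set property of $F_i$), whence $i(y)\leq i(x)$, a contradiction; the case $y\leq x$ is ruled out analogously via $\vec G$. Hence $x\parallel y$, and Lemma~\ref{lfrhtrRlemma}\eqref{lfrhtrRlemmab} gives $x\lft y$ or $y\lft x$. The latter is excluded because the already-proved forward direction would then yield $i(y)<i(x)$, contradicting the hypothesis. Therefore $x\lft y$.

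I expect the substantive step to be the forward implication: Lemma~\ref{lfrhtrRlemma}\eqref{lfrhtrRlemmaa} is precisely the tool that propagates $x\lft y$ past a rival maximal element $u$ strictly on the right of $y$, ensuring that $x$ cannot still be ``waiting'' outside $F_{n-1}$ when $y$ is finally enumerated. The remaining arguments amount to standard bookkeeping with up-sets and the reverse-inclusion ordering $\dleq$.
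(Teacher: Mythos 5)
Your proof is correct and follows essentially the same route as the paper: identify $\lsp{\filter x}$ and $\rsp{\filter x}$ with the earliest members of $\vec F$ and $\vec G$ containing $x$, feed this into Lemma~\ref{lemmalsprsplambda}, and settle the converse by trichotomy plus the already-proved forward direction. The only cosmetic difference is that where the paper simply cites the left-closedness of the $F_i$ from Lemma~\ref{lMvecFvecBjoIn}, you re-derive that fact inline via the rival maximal element $u$.
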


\begin{proof} To prove the ``only if'' part, assume $x\lft y$, and let $n$ be the smallest subscript such that $y\in F_n$. 
Note that $y\in F_n$ if{f} $\filter y\subseteq F_n$ if{f} $F_n\dleq \filter y$. Note also that $n>k$ if{f} $F_n\dleq F_k$. Therefore, $F_n=\lsp{\filter y}$. 
Also, if $m$ is the smallest subscript such that $x\in F_m$, then $F_m=\lsp{\filter x}$. Since $F_n$ is left-closed, 
$x\in F_n$, which implies $m\leq n$. In fact, $m<n$ since $x\neq y$ yields $m\neq n$. Thus $\lsp{\filter x} = F_m \dsgeq F_n = \lsp{\filter y}$. Left-right duality yields $\rsp{\filter x} \dsleq  \rsp{\filter y}$. Therefore, since $\lbound{\beta_2(Q))=\vec F}$ and $\rbound{\beta_2(Q))=\vec G}$ by 
Definition~\ref{defbetaket}, Lemma~\ref{lMvecFvecBjoIn}, and Lemma~\ref{mxlnckhtlnsK}, we can apply Lemma~\ref{lemmalsprsplambda} to obtain $\filter x\lft \filter y$ in $\beta_2(Q)$.  This proves the ``only if'' part.  

Conversely, assume $\filter x\lft \filter y$ in $\beta_2(Q)$. Then, in particular, $\filter x\parallel \filter y$. 
Clearly, 
$\text{$u\leq v$ in $Q$ if{f} $\filter u\supseteq \filter v$ if{f}  $\filter u\dleq \filter v$ in $\tuple{\Hco Q;\dleq}$.}
$
In particular, $u\parallel v$ in $Q$ if{f} $\filter u\parallel \filter v$ in $\tuple{\Hco Q;\dleq}$. 
This yields $x\parallel y$. Hence $x\lft y$ or $y\lft x$ in $Q$. Since $y\lft x$ would give a contradiction by the ``only if'' part, we obtain  $x\lft y$.
\end{proof}

\begin{lemma}\label{betaonetWo} If $Q$ is a quasiplanar diagram, then the planar diagrams  $\beta_1(Q)$ and $\beta_2(Q)$ are the same, up to similarity.
\end{lemma}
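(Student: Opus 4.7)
My plan is to use the order isomorphism $\eptofilt\colon \Elig(Q) \to \Hco Q$ provided by Lemma~\ref{phipilemMa}, which sends $\pair{x}{y}\mapsto \hcofilter\set{x,y}$, and to show that $\eptofilt$ is in fact a similarity map from $\beta_1(Q)$ to $\beta_2(Q)$. This simultaneously establishes the existence of $\beta_1(Q)$ promised in Definition~\ref{defbetaegy} and the desired identification. Since $\eptofilt$ is already an order isomorphism, the remaining task is to verify that it respects the ``on the left'' relation: for $F_i=\eptofilt(\pair{x_i}{y_i})=\hcofilter\set{x_i,y_i}$, I need to prove $F_1\lft F_2$ in $\beta_2(Q)$ iff $x_1\leslft x_2$ and $y_1\bigglft y_2$.

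The first key step is to identify $\lds{F_i}$ and $\rds{F_i}$ inside $\beta_2(Q)$. By Lemma~\ref{MIRdescR}, the meet-irreducibles of $\beta_2(Q)$ are exactly the principal filters $\filter z$ with $z\in Q\setminus\set{0,1}$; and by Lemma~\ref{sdGT}\eqref{sdGTc}, $\Min F_i=\MinBetw{x_i}{y_i}$. Translating via $z\mapsto\filter z$, the minimal elements of $\filter{F_i}\cap \Mir{\beta_2(Q)}$ in $\dleq$ form the antichain $\set{\filter z:z\in\MinBetw{x_i}{y_i}}$. Lemma~\ref{uAlftzm} transports the ``on the left'' relation: $\filter{z_1}\lft\filter{z_2}$ in $\beta_2(Q)$ iff $z_1\lft z_2$ in $Q$, so the leftmost of this antichain is $\filter{x_i}$ (because $x_i=\lmost{F_i}$), and analogously $\rds{F_i}=\filter{y_i}$.

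For the second key step, I invoke Lemma~\ref{lMadsplrdu}\eqref{lMadsplrdub} applied to $\beta_2(Q)$, which is a planar, slim, semimodular lattice diagram by Lemmas~\ref{nhVbztZTsV}, \ref{lMvecFvecBjoIn}, and \ref{mxlnckhtlnsK}. It yields that $F_1\lft F_2$ in $\beta_2(Q)$ iff $\filter{x_1}\leslft\filter{x_2}$ and $\filter{y_1}\bigglft\filter{y_2}$ in $\beta_2(Q)$. Since $z\mapsto\filter z$ is an order-embedding from $(Q\setminus\set 0,\leq)$ into $(\Hco Q,\dleq)$ (as $z_1\leq z_2\Leftrightarrow \filter{z_1}\supseteq\filter{z_2}\Leftrightarrow\filter{z_1}\dleq\filter{z_2}$), and since Lemma~\ref{uAlftzm} converts the lateral relation as well, these conditions rewrite as $x_1\leslft x_2$ and $y_1\bigglft y_2$, matching \eqref{defbetaegyb}.

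The main technical obstacle is bookkeeping: one must keep the reverse-inclusion convention $\dleq$ on $\Hco Q$ synchronized with the forward order of $Q$, and track each strict versus non-strict and left versus right decoration correctly when switching between the two sides via $\eptofilt$ and the Lemmas~\ref{uAlftzm} and \ref{MIRdescR}. Once these conversions are carefully verified, the statement follows by a direct assembly of the structural lemmas listed above.
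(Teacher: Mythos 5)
Your argument is correct, and it diverges from the paper's own proof at the decisive step. The first half coincides: your identification $\lds{F_i}=\filter{x_i}$, $\rds{F_i}=\filter{y_i}$ via Lemma~\ref{MIRdescR}, Lemma~\ref{sdGT}\eqref{sdGTc}, and Lemma~\ref{uAlftzm} is exactly the paper's preparatory formula \eqref{nBmYW}. After that, however, the paper does not use Lemma~\ref{lMadsplrdu} (despite announcing \eqref{nBmYW} as a preparation for it): instead it computes the left supports $\lsp{X_i}$ along the explicitly constructed left boundary chain $\vec F$, proving \eqref{dTMgHpp}, using left-closedness from Lemma~\ref{lMvecFvecBjoIn} together with Lemma~\ref{lemmalsprsplambda} to get one implication ($X_1\lft X_2$ implies $x_1\leslft x_2$ and $y_1\bigglft y_2$), and then recovering the converse by a trichotomy argument for $\eptofilt=\filttoep^{-1}$. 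You instead invoke the biconditional Lemma~\ref{lMadsplrdu}\eqref{lMadsplrdub} in $\beta_2(Q)$ and translate both sides through $z\mapsto\filter z$ (order-reversing inclusion matching $\dleq$, lateral relation matching by Lemma~\ref{uAlftzm}), obtaining both directions at once; this is legitimate, since Lemma~\ref{lMadsplrdu} is proved independently for arbitrary planar, slim, semimodular lattice diagrams and $\beta_2(Q)$ is such a diagram by Lemmas~\ref{nhVbztZTsV}, \ref{lMvecFvecBjoIn}, and \ref{mxlnckhtlnsK}, so no circularity arises. What each route buys: yours is shorter and structurally cleaner, because the ``iff'' of Lemma~\ref{lMadsplrdu} dispenses with the separate converse argument and actually makes \eqref{nBmYW} do real work; the paper's route stays closer to the concrete boundary chains $\vec F$ and $\vec G$ and relies only on the lighter Lemma~\ref{lemmalsprsplambda} rather than the full dual-support machinery in the key computation. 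Your bookkeeping of strict versus non-strict relations and of the reversal $\dleq$ versus $\subseteq$ is handled correctly, so I see no gap.
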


\begin{proof} First, as a preparation to use Lemma~\ref{lMadsplrdu}, 
we show that if $X\in \Hco Q$, then 
\begin{equation}\label{nBmYW}
\lds X=\filter{\lmost X}\,\text{ and }\,\rds X=\filter{\rmost X}\text.
\end{equation}
It follows from Lemma~\ref{sdGT} that $X=\filter{\MinBetw {\lmost X}{\rmost X} }$.
We know from Lemma~\ref{MIRdescR} that the meet-irreducible elements of $\beta_2(Q)$ are exactly the $\filter x$, $x\in Q\setminus\set{0,1}$. We have to consider the minimal ones above $X$, with respect to ``$\dleq$''. That is, the maximal ones below $X$, with respect to set inclusion. Clearly, they are the members of $A=\set{\filter x: x\in\MinBetw {\lmost X}{\rmost X}}$. By definition,  $\lds X$ is the leftmost member of $A$ with respect to $\lft$ defined in $\beta_2(Q)$.  Hence, by Lemma~\ref{uAlftzm}, $\lds X=\filter{\lmost X}$. The rest of \eqref{nBmYW} follows similarly.

Next, consider the order-isomorphism $\filttoep\colon \Hco Q\to \Elig (Q)$, defined by  $F\mapsto \pair{\lmost F}{\rmost F}$ in Lemma~\ref{phipilemMa}. To show that $\filttoep$ preserves the relation $\lft$, assume that $X_1,X_2\in \Hco Q$ and $X_1\lft X_2$. Let $x_i=\lmost{X_i}$ and $y_i=\rmost{X_i}=y_i$.  By Lemma~\ref{sdGT}, we have $X_i=\hcofilter{\set{x_i,y_i}}$ for $i\in\set{1,2}$. With reference to the notation introduced in Definition~\ref{defbetaket}, we claim that
\begin{equation}\label{dTMgHpp}
\lsp{X_i}=F_{n_i}\iff n_i=\min\set{j: y_i\in F_j}\text.
\end{equation} 
To see this, we can argue as follows: $\lsp{X_i}=F_k$ $\iff$ $F_k\dleq X_i$ and $F_k$ is maximal with respect to $\dleq$ 
 $\iff$ $F_k\supseteq X_i$ and $F_k$ is minimal with respect to set inclusion  $\iff$ $y_i\in F_k$ and $k$ is minimal; in the last step we used that $F_k$ is left-closed by Lemma~\ref{lMvecFvecBjoIn} and $x_i\elft y_i$, and thus  $y_i\in F_k$ implies $x_i\in F_k$. This proves \eqref{dTMgHpp}. 

From Lemma~\ref{lemmalsprsplambda}, we obtain $\lsp{X_1}\dsgeq \lsp{X_2}$. This and \eqref{dTMgHpp} yield that 
$F_{n_1}=\lsp{X_1}\subset \lsp{X_2}= F_{n_2}$, $y_1\in F_{n_1}$, $y_2\in F_{n_2}$,  and $y_2\notin F_{n_1}$ since we have $n_1<n_2$ by $F_{n_1}\subset F_{n_2}$. Since $n_1\neq n_2$, we have $y_1\neq y_2$. Hence, either $y_1\bigglft y_2$, or  $y_2\bigglft y_1$. However, if we had $y_2\bigglft y_1$, then we would obtain that $y_2$ belongs to $F_{n_1}$ since $y_1\in F_{n_1}$ and $F_{n_1}$ is left-closed by Lemma~\ref{lMvecFvecBjoIn}, and this would be a contradiction. Consequently, $y_1\bigglft y_2$. The left-right dual of the argument above gives $x_1\leslft x_2$. Hence, by \eqref{defbetaegyb}, we obtain 
$\filttoep(X_1)=\pair{x_1}{y_1} \lft \pair{x_2}{y_2}= \filttoep(X_2)$. This means that $\filttoep$ preserves $\lft$.  

Finally, to show that $\eptofilt=\filttoep^{-1}$ preserves $\lft$, assume that $\filttoep(X_1) \lft \filttoep(X_2)$. Then we have $X_1\parallel X_2$ since $\filttoep$ is an order-isomorphism by Lemma~\ref{phipilemMa}. Thus either $X_1\lft X_2$, or $X_2\lft X_1$. However, $X_2\lft X_1$ would imply the contradiction $\filttoep(X_2) \lft  \filttoep(X_1)$ since $\filttoep$ preserves $\lft$. Hence, $X_1\lft X_2$. 
\end{proof}

\subsection{The end of the proof}
Armed with the auxiliary statements presented so far, now we are in the position to accomplish our goal.

\begin{proof}[Proof of Theorem~\ref{thmmain}]
By Lemma~\ref{betaonetWo}, $\beta_1(Q)$ equals $\beta_2(Q)$, up to similarity. Hence, in what follows, no matter if  $p$ is 1 or 2, we can use any of $\beta_1$ and $\beta_2$. 

Part \eqref{thmmaina} is Lemma~\ref{lemmaalfadefok}, while Part \eqref{thmmainb} follows from Lemmas~\ref{nhVbztZTsV},  \ref{lMvecFvecBjoIn}, and \ref{mxlnckhtlnsK}.

To prove Part  \eqref{thmmainc}, let $D$ be a  finite, slim, semimodular lattice diagram, and let $Q=\alpha(D)$. Define a map $\alpha\colon D\to 
\beta_1(Q)$ by $x\mapsto \pair{\lds x}{\rds x}\in \Elig(Q)$.  (Here, for technical reasons, we extend the definition of $\lds x$ and $\rds x$ by letting $\lds 1=\rds 1=1$; this will cause no problem and makes the definition of $\alpha$ meaningful.)   Since $x=\lds x\wedge \rds x$ by Lemma~\ref{mdiWrT}, $\alpha$ is injective. Assume $\pair yz\in \Elig(Q)$ such that $y\neq z$, and define $x$ by $x=y\wedge z$. This is an irredundant meet representation since $y\parallel z$. By the uniqueness part of  Lemma~\ref{mdiWrT} and $y\lft z$, we obtain $\pair yz=\pair{\lds x}{\rds x}=\alpha(x)$. Hence, $\alpha$ is surjective. Finally, comparing 
Lemma~\ref{lMadsplrdu}\eqref{lMadsplrdua}
to \eqref{defbetaegya} and Lemma~\ref{lMadsplrdu}\eqref{lMadsplrdub} to \eqref{defbetaegyb}, we conclude that $\alpha$ is similarity map. This proves Part  \eqref{thmmainc}.

To prove Part  \eqref{thmmaind}, let $Q$ be a quasiplanar diagram. Combining Lemmas~\ref{MIRdescR} and  \ref{betaonetWo}, we conclude $\Mir{(\beta_1(Q))}=\bigset{\pair xx: x\in Q\setminus\set{0,1}}$. To form $\alpha\bigl(\beta_1(Q))\bigr)$, we have to add a bottom and a top to $\Mir{(\beta_1(Q))}$; denote them by $\pair 00$ and $\pair 11$, respectively. 
Then we have $\alpha\bigl(\beta_1(Q))\bigr)= \set{\pair xx: x\in Q}$.
We claim that $\gamma\colon Q\to \alpha\bigl(\beta_1(Q))\bigr)$, defined by $x\mapsto \pair xx$, is a similarity map. Obviously, $\gamma$ is a bijection.
Since the position of a top or bottom element in a diagram is unique up to similarity, it suffices to deal with the elements of $Q\setminus\set{0,1}$.
Assume $x,y\in Q\setminus\set{0,1}$. Based on  \eqref{defbetaegya}, we have
\[ \pair xx\leq \pair yy \iff x\leqlft y \text{ and }y\geqlft x \iff x\leq y,
\]
which shows that $\gamma$ is an order-isomorphism.
Based on  \eqref{defbetaegyb}, we obtain
\[ \pair xx\leq \pair yy \iff 
x \leslft y\text{ and }x\bigglft y \iff x\lft y\text.
\]
Therefore, $\gamma$ is a similarity map, completing the proof of Part  \eqref{thmmaind}.
\end{proof}

\section{Comments and examples}\label{commentsection}
One may ask which finite, bounded posets have quasiplanar diagrams. 

\begin{proposition} A finite, bounded partially ordered set $P$ has a quasiplanar diagram if{f} its order dimension is at most two.
\end{proposition}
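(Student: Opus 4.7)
The plan is to treat the two directions separately: the ``only if'' direction reduces to the main theorem, and the ``if'' direction requires an explicit grid construction.

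For the \emph{only if} direction, I would start from a quasiplanar diagram $Q$ of $P$ and apply Theorem~\ref{thmmain}\eqref{thmmainb}--\eqref{thmmaind}. This produces a planar slim semimodular lattice diagram $D=\beta_1(Q)$ with $\alpha(D)$ similar to $Q$. Since $D$ is planar, its underlying lattice has order dimension at most two. By Definition~\ref{alfaDdef}, the poset underlying $Q$ satisfies $Q\setminus\set{\tilde 0}=\set{1_D}\cup\Mir D$, which is a subposet of $D$ and hence has order dimension at most two. Adjoining the new bottom $\tilde 0$ preserves this bound, so $P$ has order dimension at most two.

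For the \emph{if} direction, given $P$ of order dimension at most two, I would fix an order-embedding $f=(\ell,r)\colon P\hookrightarrow G:=C_1\times C_2$ into a product of two finite chains, arranged so that $f$ sends $0_P$ and $1_P$ to the bottom and top of $G$ (by restricting $C_1$ and $C_2$ to the intervals spanned by $\ell(P)$ and $r(P)$). Then I would place each $p\in P$ at the plane point $(\ell(p)-r(p),\ell(p)+r(p))$ and draw each covering pair of $P$ as a straight segment. A small perturbation, as in the proof of Lemma~\ref{lemmaalfadefok}, ensures that no edge passes through a non-endpoint vertex and that no two incomparable elements share a vertical line. This yields a diagram $Q$ of $P$.

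To verify that $Q$ is quasiplanar, I would fix incomparable $a,b\in P$ and an arbitrary maximal chain $C=(0_P=c_0<c_1<\dots<c_n=1_P)$ of $P$ through $b$. The key geometric observation is that each straight segment of the polyline of $C$, joining the positions of $c_{i-1}$ and $c_i$, lies in the plane image of the grid rectangle $\set{g\in G:c_{i-1}\leq g\leq c_i}$. Since every element of $C$ is comparable to $b$ in $G$, the polyline of $C$ lies in the plane image of the ``bowtie'' region $[0_G,b]_G\cup[b,1_G]_G$, two grid parallelograms meeting at the position of $b$. A direct coordinate computation shows that $a$, being incomparable to $b$ in $G$, lies strictly west of this bowtie when $\ell(a)<\ell(b)$ and $r(a)>r(b)$, and strictly east when $\ell(a)>\ell(b)$ and $r(a)<r(b)$; these two cases exhaust the incomparable alternatives in $G$. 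Hence $a$ is on the left of the polyline of $C$ if and only if $\ell(a)<\ell(b)$ and $r(a)>r(b)$, a condition independent of $C$, and $Q$ is quasiplanar.

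The main obstacle is this geometric step: showing that the $P$-polyline lies in the bowtie and that $a$ sits on a definite side of it regardless of $C$. It is an elementary coordinate calculation, mirroring the argument in the proof of Lemma~\ref{lemmaalfadefok}, now applied to an arbitrary bounded subposet of a grid rather than to the specific subposet $\Jir L\subseteq L$.
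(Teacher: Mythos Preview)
Your proposal is correct and follows essentially the same approach as the paper. For the ``only if'' direction both arguments embed $P$ into a planar lattice via Theorem~\ref{thmmain}; for the ``if'' direction both place $P$ inside a grid and exploit the same bowtie/gray-rectangle geometry from the proof of Lemma~\ref{lemmaalfadefok}, the only difference being that you verify quasiplanarity by a direct coordinate computation whereas the paper argues by contradiction, extending the offending $P$-chains to maximal chains of the grid and invoking the (quasi)planarity of the grid itself.
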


\begin{proof}   Assume that $P$ is quasiplanar. By Theorem~\ref{thmmain}, $P$ can be order-embedded into a finite, slim, semimodular lattice $L$. Since $L$ has a planar diagram by Lemma~\ref{mxlnckhtlnsK}, cited from \init{G.\ }Cz\'edli and \init{E.\,T.\ }Schmidt \cite{czgschtJH},  it is of order-dimension at most two. Thus $P$ is of order-dimension at most two.

Next, assume that $P$ is of order-dimension at most two. Then $P$ has a diagram that is a subdiagram of a grid $G$, like in Figure~\ref{fig1}. Let $Q$ be the diagram of $P$ that is obtained from $G$ by deleting superfluous grid points and connecting covering elements of $P$ by straight line segments. For the sake of contradiction, suppose $Q$ is not quasiplanar. Then there are $x\nonparallel y\in Q$ and maximal chains $C_1$ and $C_2$ of $Q$ such that $y\in C_1\cap C_2$, $x$ is strictly on the right of $C_1$, and it is strictly on the left of $C_2$.  
Let $E_1$ and $E_2$ be the leftmost and the rightmost maximal chains of $G$ that extend $C_1$ and $C_2$, respectively. Then $x$ is on the right of $E_1$, it is  on the left of $E_2$ in $G$, and $x\parallel y$ in $G$. But this is a contradiction since $G$ is quasiplanar, in fact, it is planar.
\end{proof}

\begin{figure}
\centerline
{\includegraphics[scale=1.0]{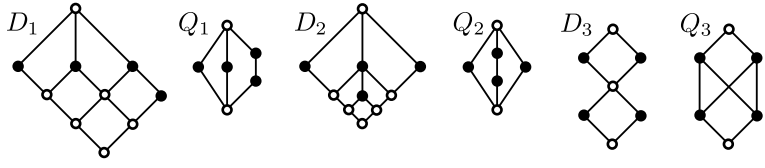}}
\caption{$Q_1$ is order-isomorphic to $Q_2$ but $D_1\neq D_2$    \label{fig2}}
\end{figure}

We conclude the paper with some examples. In Figure~\ref{fig2}, $Q_i=\alpha(D_i)$, and the meet irreducible elements are black-filled. The figure explains why we deal with diagrams rather than lattices and posets: order-isomorphic quasiplanar diagrams can determine non-isomorphic lattices. 
 Also, $D_3$ is the smallest slim, semimodular lattice diagram such that $Q_3=\alpha(D_3)$ is not planar, and there  is no planar diagram order-isomorphic to $\alpha(D_3)$. Finally,  Figure~\ref{fig3} illustrates that 
Lemma~\ref{lemmaalfadefok} is not so obvious as it may look. In the figure, $D_4$ and $D_5$ are equal, up to similarity. 
For $i\in \set{4,5}$, $Q_i$ is obtained from 
$D_i$ by omitting vertices and connecting the remaining ones, without changing their position. We have $Q_4=\alpha(D_4)$. However,
$Q_5\neq\alpha(D_5)$, because $Q_5$ is not a quasiplanar diagram since $c\parallel a$, $c$ is on the left of the chain $\set{0,a,f,1}$ through $a$, but $c$ is on the right of the chain $\set{0,a,d,1}$ through~$a$.

\begin{figure}
\centerline
{\includegraphics[scale=1.0]{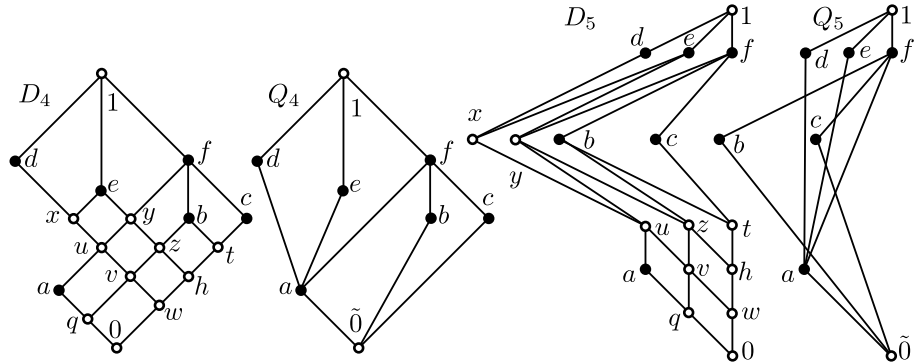}}
\caption{$D_4=D_5$ and $Q_4=\alpha(D_4)$, but $Q_5\neq\alpha(D_5)$ \label{fig3}}
\end{figure}

\end{document}